\numberwithin{equation}{section}
\DeclarePairedDelimiter\abs{\lvert}{\rvert}%
\newtheorem{theorem}{Theorem}[section]
\newtheorem{lemma}[theorem]{Lemma}
\newtheorem{assumption}{Assumption}
\theoremstyle{definition}
\DeclareMathOperator*{\argmin}{argmin \,}
\title{Recovering orthogonal tensors under arbitrarily strong, but locally correlated, noise}
 \author{Oscar Mickelin}
\address{Department of Mathematics, Massachusetts Institute of Technology, Massachusetts, USA}
\email{oscarmi@mit.edu}
\author{Sertac Karaman}
\address{Department of Aeronautics and Astronautics, Massachusetts Institute of Technology, Massachusetts, USA}
\email{sertac@mit.edu}
\subjclass[2010]{Primary 65F99, 15A69}
\keywords{Tensors, tensor completion, compressed tensor formats, canonical decomposition, orthogonally decomposable tensors.}
\begin{document}
\maketitle
\begin{abstract}
We consider the problem of recovering an orthogonally decomposable tensor with a subset of elements distorted by noise with arbitrarily large magnitude. We focus on the particular case where each mode in the decomposition is corrupted by noise vectors with components that are correlated locally, i.e., with nearby components. We show that this deterministic tensor completion problem has the unusual property that it can be solved in polynomial time if the rank of the tensor is sufficiently large. This is the polar opposite of the low-rank assumptions of typical low-rank tensor and matrix completion settings. We show that our problem can be solved through a system of coupled Sylvester-like equations and show how to accelerate their solution by an alternating solver. This enables recovery even with a substantial number of missing entries, for instance for $n$-dimensional tensors of rank $n$ with up to $40\%$ missing entries.
\end{abstract}

\section{Introduction}
We consider the problem of reconstructing a tensor $T$ in $\mathbb{R}^{n_1 \times \cdots \times n_d}$ of the form
\begin{equation}\label{eq:intro}
T = \sum_{k=1}^r v_{k1} \otimes v_{k2} \otimes \cdots \otimes v_{kd},
\end{equation}
from observations where its elements have been corrupted by noise. We will be interested in the case of orthogonally decomposable tensors $T$, i.e., one or more of the sets $\{v_{1j}, v_{2j} , \ldots , v_{rj} \}$ consists of orthogonal vectors, for some index $j$ in $1, \ldots , d$. These tensors arise for instance in independent component analysis \cite{comon1994independent} and learning latent variable models \cite{anandkumar2014tensor}.

The tractability of this problem strongly depends on the specific kind of noise considered. For general noise terms $E$, the problem becomes that of finding an optimal orthogonally decomposable approximation to the perturbed tensor $T + E$. This is in general NP-hard \cite{hillar2013most,mickelin2019optimal}, and several different approximation techniques have been proposed in the recent literature. Approaches using perturbations of tensor power iteration \cite{zhang2001rank,anandkumar2014tensor,mu2015successive,mu2017greedy} enjoy useful stability properties and can therefore cope with noise terms $E$ of bounded magnitude. Jacobi-type algorithms \cite{martin2008jacobi,li2019jacobi,usevich2019approximate} and algorithms based on polar decompositions \cite{chen2009tensor,hu2019linear}, alternating least-squares \cite{wang2015orthogonal} or singular value-decompositions \cite{guan2019numerical} have also seen increased interest in recent years. A different setting includes tensor completion problems, where the location of the non-zero elements of $E$ are typically chosen probabilistically. Reconstruction guarantees are then provided under the assumption that the tensor $T$ is of sufficiently low rank \cite{gandy2011tensor,liu2012tensor,goldfarb2014robust,kressner2014low,yuan2016tensor,gu2014robust,shah2015sparse,anandkumar2016tensor}. 

The goal of this article is to highlight how the structure of an orthogonally decomposable tensor enables reconstruction under arbitrarily strong, non-sparse noise, provided that the non-zero entries of the noise term are sufficiently structured, and that the rank of the tensor is above a certain threshold. Our results are meant to complement the tensor completion literature, since high-rank assumptions are required instead of the typical low-rank assumptions, and we achieve recovery for deterministic sampling patterns that cannot be efficiently tackled by e.g., nuclear-norm based techniques. Although simple, the techniques are surprisingly powerful, and are able to recover $n$-dimensional tensors of rank $n$ with up to $40\%$ missing entries. The outlook of the article is most closely related to the problem of decomposing a matrix into the sum of a matrix of low-rank and a diagonal matrix \cite{oseledets2006unifying,saunderson2012diagonal,saunderson2013diagonal}, but relies on different techniques. We would also like to mention recent articles on deterministic tensor completion \cite{ashraphijuo2017fundamental,ashraphijuo2019deterministic,ashraphijuo2020characterization,sorensen2019fiber}. The tensor structure in \eqref{eq:intro} encodes significant redundancy, which we will show enables a simple algebraic approach to exact recovery of the tensor $T$ and its factors $v_{kj}$. Focusing on a particular locally correlated noise model, we show how to construct coupled Sylvester-type equations that reconstruct $T$ even with a large number of unknown entries. We also show how an alternating linear solver then recovers $T$ efficiently. Our approach also easily extends to different sampling patterns.  

The remainder of this article is organized as follows. Section~\ref{sec:notation} introduces the notation, and section~\ref{sec:statement} the problem statement. Section~\ref{sec:alg_general} details the proposed algorithm and section~\ref{sec:numerical} shows numerical results. Implementations of the algorithms in this paper are publicly available online.\footnote{\href{https://github.com/oscarmickelin/locally-correlated-recovery}{\texttt{https://github.com/oscarmickelin/locally-correlated-recovery}}}

\section{Notation}\label{sec:notation}
For two integers $n$ and $m$ with $m \geq n$, we denote the range of integers contained (inclusively) between them by
\begin{equation}
\llbracket n, m \rrbracket := \{n, n+1, \ldots , m-1, m\}.
\end{equation}
We will denote the cardinality of a set of integers $\mathcal{I}$ by $\abs{\mathcal{I}}$.

The symbols $e_k$ for $k=1,\ldots , n$ will be reserved for the standard basis vectors in $\mathbb{R}^n$.

For vectors $v_1, \ldots , v_d$ with $v_k$ in $\mathbb{R}^{n_k}$, their tensor product $v_1 \otimes v_2 \otimes \cdots \otimes v_d$ is an element of $\mathbb{R}^{n_1 \times n_2 \times \cdots \times n_d}$ defined by
\begin{equation}
\left( v_1 \otimes v_2 \otimes \cdots \otimes v_d \right)(i_1, \ldots , i_d) := v_1(i_1)v_2(i_2)\cdots v_d(i_d).
\end{equation}

For two tensors $T$ and $S$ in $\mathbb{R}^{n_1\times \cdots \times n_d}$, we define their Euclidean inner product by
\begin{equation}
\langle T,S \rangle = \sum_{i_1 = 1}^{n_1} \cdots  \sum_{i_d = 1}^{n_d} T(i_1, \ldots , i_d) S(i_1, \ldots , i_d).
\end{equation}
The Euclidean norm of $T$ is then defined by $\|T\| = \sqrt{\langle T, T\rangle }$. 

For a tensor $T$ in $\mathbb{R}^{n_1\times \cdots \times n_d}$, and an index $\ell \in \{1,\ldots , n_d\}$, the $\ell$th slice of $T$ refers to the tensor $S$ in $\mathbb{R}^{n_1\times \cdots \times n_{d-1}}$ with elements
\begin{equation}
S(i_1, \ldots , i_{d-1}) = T(i_1, \ldots , i_{d-1}, \ell),
\end{equation}
for $i_k = 1, \ldots , n_k$, where $k=1, \ldots , d-1$.

For a matrix $M$ in $\mathbb{R}^{n\times m}$, denote the $\ell$th row of $M$ by $[M]_\ell$. We will also denote the set of non-zero elements of $M$ by
\begin{equation}
\text{supp}(M) = \Big\{(i,j) \in \llbracket 1,n\rrbracket \times \llbracket 1, m \rrbracket : M(i,j) \neq 0\Big\}.
\end{equation}
For a subset $\mathcal{I} \subseteq \llbracket 1, n \rrbracket  \times \llbracket 1, m \rrbracket$, we will denote the set of elements of $M$ contained in the index set $\mathcal{I}$ by
\begin{equation}
M \Big|_\mathcal{I} := \Big\{ M(i,j)  \in \mathbb{R} : (i,j) \in \mathcal{I} \Big\}.
\end{equation}
When accessing matrix elements, we will denote by a colon all the elements in the corresponding mode, e.g., $M(:,m)$ or $M(n,:)$.

\section{Problem statement}\label{sec:statement}
\subsection{Motivation}
As a motivation for the sampling patterns treated in this article, consider the problem of recovering a tensor $T = \sum_{k=1}^r a_k\otimes b_k \otimes c_k $ in $\mathbb{R}^{n\times n \times n}$ from a set corrupted samples of the form $\sum_{k=1}^r u_k(t) \otimes v_k(t) \otimes w_k(t) $, indexed by the variable $t$. Here
\begin{align}
u_k(t) &= a_k + n_{1,k}(t) ,\\
v_k(t) &= b_k + n_{2,k}(t) ,\\
w_k(t) &= c_k + n_{3,k}(t) ,
\end{align}
where $n_{j,k}(t)$ are noise terms with distributions symmetric around the origin. Our results allow for terms $n_{j,k}(t)$ with arbitrary magnitude, provided that their covariance terms obey a certain structure. To illustrate this statement, denote the covariance matrix of $n_{j,k}(t)$ and $n_{i,k}(t)$ by $Q_{ijk} := \mathbb{E}_t\left[n_{i,k}(t) n_{j,k}(t)^T\right]$. Since the distributions of the $n_{j,k}(t)$ are symmetric around the origin, the corrupted tensor can then be expanded in expectation as
\begin{equation}
\begin{split}\label{eq:motivaton_unknown}
&\mathbb{E}_t \! \left[ \sum_{k=1}^r \! u_k(t) \otimes v_k(t) \otimes w_k(t) \! \right] \!\! = \!\! \sum_{k=1}^r \! a_k\otimes b_k \otimes c_k + \mathbb{E}_t \! \left[ \sum_{k=1}^r  \! n_{1,k}(t) \otimes n_{2,k}(t) \otimes c_k \! \right] \\
& \qquad \qquad \quad \, + \mathbb{E}_t\left[ \sum_{k=1}^r  n_{1,k}(t) \otimes b_k \otimes n_{3,k}(t) \right] + \mathbb{E}_t\left[ \sum_{k=1}^r a_k \otimes n_{2,k}(t) \otimes n_{3,k}(t) \right] \\
&\qquad \qquad \quad \, = T + E.
\end{split}
\end{equation}
Here, we treat $E$ as a fully unknown tensor, with sparsity pattern determined by the sparsity patterns of the matrices $Q_{ijk}$. Each slice of the first term 
\begin{equation}
\mathbb{E}_t\left[ \sum_{k=1}^r  n_{1,k}(t) \otimes n_{2,k}(t) \otimes c_k \right] =\sum_{k=1}^r \mathbb{E}_t\left[  n_{1,k}(t) \otimes n_{2,k}(t)\right] \otimes c_k = \sum_{k=1}^r Q_{12k}\otimes c_k,
\end{equation}
has (in the general case) non-zero elements in the positions given by $\bigcup_{k=1}^r \text{supp}(Q_{12k})$. In other words, this first term $\mathbb{E}_t\left[ \sum_{k=1}^r  n_{1,k}(t) \otimes n_{2,k}(t) \otimes c_k \right]$ has support contained in
\begin{equation}
\left(\bigcup_{k=1}^r \text{supp}(Q_{12k}) \times \llbracket 1 , n\rrbracket\right).
\end{equation}

Similarly, the second two terms in \eqref{eq:motivaton_unknown} have support contained in permutations of the index-sets
\begin{equation}
\left(\bigcup_{k=1}^r \text{supp}(Q_{13k}) \times \llbracket 1 , n\rrbracket\right), \qquad \left(\bigcup_{k=1}^r \text{supp}(Q_{23k}) \times \llbracket 1 , n\rrbracket\right),
\end{equation}
respectively. Taken together, this specifies the pattern of the allowable non-zero elements of $E$. The problem of recovering the tensor $T$ can therefore be cast as a tensor completion problem with a deterministic sparsity pattern encoded by the support of $E$.

In particular, this article will focus on the case of locally correlated noise terms $n_{j,k}(t)$. By this, we mean terms for which the covariance terms $Q_{ijk}$ are band-diagonal, for all $i$, $j$, and $k$. In detail, we will specify a band-width $b$ and write the support of the covariance matrices as
\begin{equation}
\text{supp}(Q_{ijk}) = \Big\{ (i_1,i_2) \in \llbracket 1,n \rrbracket^2 :  \abs{i_1 - i_2} \leq b \Big\}.
\end{equation}
One can then verify that the resulting sparsity pattern for $E$ is of the form
\begin{equation}\label{eq:def_sparsity}
\Big\{ (i_1,i_2,i_3) \in \llbracket 1,n \rrbracket^3 :  \abs{i_k - i_\ell} \leq b , \text{ for some } k,\ell \in \{1,2,3\} \text{ with } k \neq \ell \Big\}.
\end{equation}

An illustration of this sparsity pattern is shown in figure~\ref{fig:fig_pattern}.

\begin{figure}
\centering
\includegraphics[width=\textwidth]{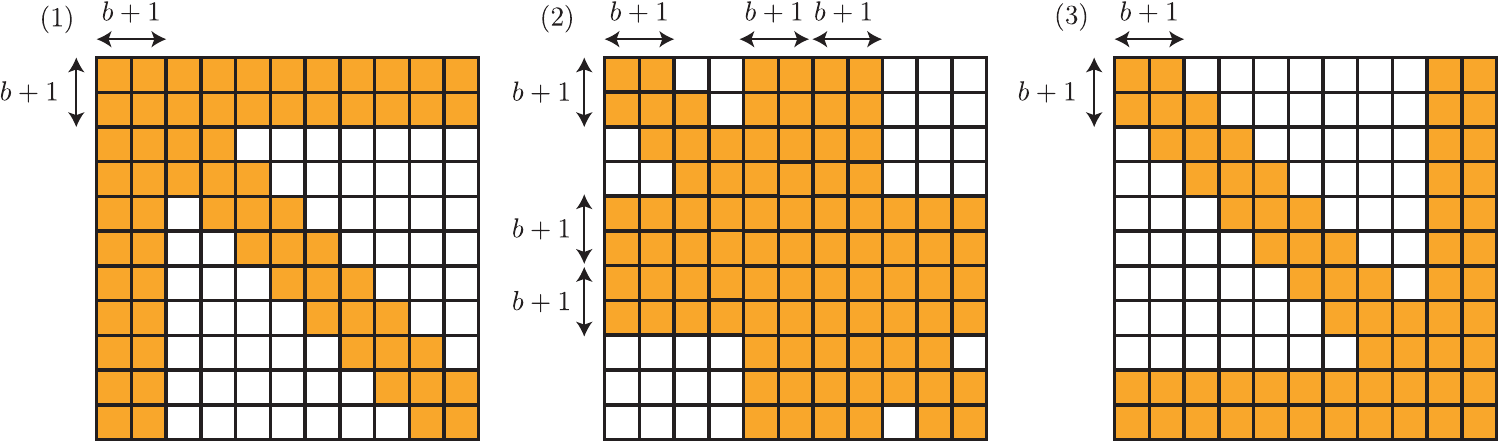}
\caption{Illustration of the sparsity pattern in \eqref{eq:def_sparsity}. Left: first slice of $E$. Middle: $\ell$th slice of $E$, for $b \leq \ell \leq n-b$. Right: $n$th slice of $E$.}
\label{fig:fig_pattern}
\end{figure}

However, we would like to emphasize that the techniques of this article are not by any means restricted to this specific sparsity pattern of $E$. Our approach enables recovery of $T$ for a variety of different sampling patterns encoded by $E$. In the three-dimensional case, two (non-exhaustive) examples are illustrated in figure~\ref{fig:fig_patterns}.

\begin{figure}
\centering
\includegraphics[width=\textwidth]{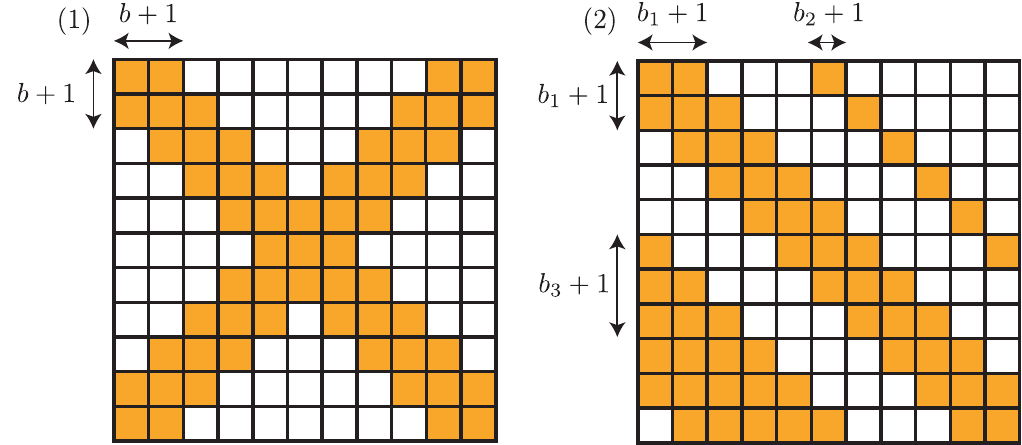}
\caption{Two illustrations of sparsity patterns of $E$ for which $T$ in \eqref{eq:motivaton_unknown} can be recovered from knowledge only of the corrupted tensor. Each slice of $E$ has the sparsity pattern indicated either in $(1)$ or $(2)$.}
\label{fig:fig_patterns}
\end{figure}

\subsection{Problem statement}
Consider an unknown tensor $T = \sum_{k=1}^r \otimes_{j=1}^d v_{kj}$ of rank $r$, which we aim to recover by knowledge of those elements of $T$ contained in a specific sampling pattern. We will consider a fixed sampling pattern, encoded by a tensor $E$. Although unknown, $E$ does, however, have a known sparsity pattern. We therefore seek to reconstruct all elements of $T$ from knowing only a corrupted version of $T$, denoted by $S$ and defined by
\begin{equation}\label{eq:defT}
S = T + E.
\end{equation}

We will in the following focus on the sparsity pattern in \eqref{eq:def_sparsity}, and in particular, study three-dimensional $n\times n \times n$ tensors. We therefore write $T = \sum_{k=1}^r a_k \otimes b_k \otimes c_k$.

Note that there clearly are examples of tensors $T$ that cannot be recovered from knowledge of $S$ alone. One set of examples consists of tensors $T$ with the same sparsity pattern as $E$. However, we will show that examples such as these are degenerate cases, in that almost all tensors $T$ can be recovered, under a few natural assumptions. Our results enable recovery under the assumption
\begin{assumption}\label{ass:ON}
The two sets of vectors $\{a_k\}_{k=1}^r$ and $\{b_k\}_{k=1}^r$ both consist of pairwise orthogonal vectors.
\end{assumption}

This assumption is reasonable to impose, since general tensor decomposition problems are NP-hard without similar assumptions \cite{hillar2013most}. Without loss of generality, we will assume that the vectors $a_k$ and $b_k$ are unit vectors, by absorbing their magnitudes into the vectors $c_k$.

Additionally, we will require a number of technical non-degeneracy conditions during the course of our recovery algorithm. They hold true for almost all collections of $\{a_k\}_{k=1}^r, \{b_k\}_{k=1}^r, \{c_k\}_{k=1}^r$, so for instance with probability one when these vectors are chosen at random. For ease of exposition, we will therefore state our results in terms of generic tensors. However, importantly, the degeneracy conditions can be explicitly verified during a run of the algorithm. Should they not hold in a specific use-case, this will be discovered and the user can remedy the deficiency by generating more data, which corresponds to reducing the bandwidth $b$. Our main result is the following.

\begin{theorem}\label{thm:main}
Assume that Assumption~\ref{ass:ON} holds, and let $T$ be a tensor of the form in \eqref{eq:defT} with unknown elements given by \eqref{eq:def_sparsity}. For generic matrices $A,B,C$, there are then constants $\alpha_1, \alpha_2, \beta_1, \beta_2 > 0$ such that $T$ can be uniquely recovered in polynomial time if 
\begin{equation}\label{eq:main_bound}
n \geq \alpha_1 b + \beta_1 ,
\end{equation}
and
\begin{equation}\label{eq:main_rank}
r \geq \alpha_2 b + \beta_2.
\end{equation}

An iterative recovery algorithm converges linearly to the true tensor, with cost $O\left( \text{max}( n^3, n^2r^2, n^2b^2N)\right)$, where $N$ is the number of required iterations.
\end{theorem}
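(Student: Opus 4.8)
The plan is to reduce the completion problem to a short sequence of linear-algebraic operations that recover $T$ exactly, to establish that this succeeds for generic data precisely in the regime \eqref{eq:main_bound}--\eqref{eq:main_rank}, and finally to recast the key step as a coupled Sylvester-type system that an alternating solver drives to the truth at a linear rate. Write $A=[a_1,\ldots,a_r]$, $B=[b_1,\ldots,b_r]$, $C=[c_1,\ldots,c_r]$, so that Assumption~\ref{ass:ON} reads $A^\top A=B^\top B=I_r$, and let $M_\ell$ be the $\ell$th mode-$3$ slice of $T$, hence $M_\ell=A\,\mathrm{diag}(C(\ell,:))\,B^\top$. I would first record the consequences of orthogonality: $A^\top M_\ell B=\mathrm{diag}(C(\ell,:))$ is diagonal, $M_p\,B\,\mathrm{diag}(C(q,:))=M_q\,B\,\mathrm{diag}(C(p,:))$ for all $p,q$, and --- because $\lvert i-j\rvert>b$ already forces $(i,j,\ell)\notin\mathrm{supp}(E)$ --- each $M_\ell$ is \emph{completely} observed on the two off-band blocks indexed by $L_\ell\times U_\ell$ and $U_\ell\times L_\ell$, where $L_\ell=\llbracket 1,\ell-b-1\rrbracket$, $U_\ell=\llbracket\ell+b+1,n\rrbracket$, and there $M_\ell(L_\ell,U_\ell)=A(L_\ell,:)\,\mathrm{diag}(C(\ell,:))\,B(U_\ell,:)^\top$. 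Since $A$ and $B$ are shared across $\ell$, the fully observed blocks of different slices overlap on common index sets; gluing the column- and row-spaces they expose recovers $\mathrm{range}(A)$ and $\mathrm{range}(B)$, hence $A$ and $B$ up to the harmless permutation and sign ambiguity. With $A,B$ in hand each slice has the normal form $M_\ell=A\,X_\ell\,B^\top$, and the remaining unknowns --- the band entries of the $M_\ell$, equivalently the rows of $C$ --- are determined by the observation equations, which after this substitution become coupled Sylvester-type linear equations $\sum_i P_i\,Z\,Q_i=R$.

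The crux is well-posedness: I claim that for generic $A,B,C$ every linear system in this chain has full column rank exactly when \eqref{eq:main_bound}--\eqref{eq:main_rank} hold. I would argue this in three steps. (i) A dimension count: the set of tensors obeying Assumption~\ref{ass:ON} is the image, under a generically immersive map, of $\mathrm{St}(n,r)\times\mathrm{St}(n,r)\times\mathbb{R}^{n\times r}$, hence has dimension $2\bigl(nr-\binom{r+1}{2}\bigr)+nr$, while the number of observed entries is $n^3-\lvert\mathrm{supp}(E)\rvert$ with $\lvert\mathrm{supp}(E)\rvert=\Theta(bn^2)$; requiring the latter to dominate the former forces $n$ to grow linearly in $b$, which is \eqref{eq:main_bound}. (ii) A nondegeneracy count behind \eqref{eq:main_rank}: for the overlapping observed blocks to actually expose the full $\mathrm{range}(A)$ and $\mathrm{range}(B)$ --- so that the normal-form substitution is lossless and the band entries are pinned down rather than free --- the relevant submatrices must have rank exactly $r$ over enough index windows, and tracking the widths of these windows forces $r$ to grow linearly in $b$; conversely, when $r$ is too small the slices are low rank and a rank-$r$ perturbation can be concealed entirely inside the band, so full column rank genuinely fails. (iii) Since full column rank is a Zariski-open condition on $(A,B,C)$, it is enough to certify it at one admissible instance (for example $A,B$ drawn from a discrete Fourier basis and $C$ generic), which reduces the claim to an explicit, if lengthy, rank computation on a structured banded matrix.

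I expect step (iii) --- certifying nonsingularity of the large, banded, $(A,B,C)$-dependent coefficient matrix at a concrete instance --- to be the main obstacle; steps (i) and (ii) are dimension bookkeeping and the remainder is a standard Zariski-topology argument. Granting full column rank, the observations admit a unique consistent tensor, necessarily $T$, and the recovery chain (a bounded number of thin SVDs and dense linear solves) runs in polynomial time, establishing the first assertion.

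For the iterative algorithm I would not assemble $\sum_i P_i\,Z\,Q_i=R$ explicitly but split the unknowns into blocks (one per slice, or one per factor matrix) and sweep a block Gauss--Seidel / alternating least-squares update; equivalently this alternates over the natural block structure of the coupled Sylvester system. Because the assembled system is overdetermined with full column rank, the sweep is a Richardson-type iteration for minimizing a strongly convex quadratic whose unique minimizer corresponds to $T$; it therefore converges linearly, with contraction factor bounded away from $1$ by the generically bounded conditioning of the diagonal blocks, which gives the ``converges linearly'' claim and fixes the iteration count $N$. The cost estimate is then routine bookkeeping: the one-time preprocessing --- thin SVDs of the observed off-band blocks and formation of the Gram/normal-equation matrices of $A$ and $B$ --- costs $O(n^3)$ and $O(n^2r^2)$, while each sweep updates $O(n)$ band blocks of size $O(b)\times O(n)$ at cost $O(n^2b^2)$ apiece, for a total of $O\bigl(\max(n^3,\,n^2r^2,\,n^2b^2N)\bigr)$, as claimed.
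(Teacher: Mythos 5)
Your plan is attractive in outline and shares several of the paper's ingredients (Sylvester-type linear systems, an alternating block solver, genericity via Zariski-openness of a full-rank condition, the observation that $A^\top M_\ell B = \mathrm{diag}(C(\ell,:))$), and your cost estimate lands on the stated bound. But the central recovery step you propose does not go through in the regime the theorem actually operates in, and the paper's main technical device is missing.

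The concrete gap is in your first step: recovering $A$ and $B$ by ``gluing the column- and row-spaces'' exposed by the fully observed off-band blocks $M_\ell(L_\ell,U_\ell)=A(L_\ell,:)\,\mathrm{diag}(C(\ell,:))\,B(U_\ell,:)^\top$. Two problems. First, knowing $\mathrm{range}(A)$ is far weaker than knowing $A$: a column space does not single out an orthonormal basis, and the permutation/sign-fixing you invoke would require a simultaneous-diagonalization (Jennrich-type) argument on blocks, which you have not supplied. Second, and fatally, $|L_\ell|+|U_\ell| \le n-2b-1$, so for any $\ell$ at least one of $|L_\ell|,|U_\ell|$ is at most $(n-2b-1)/2 < r$ whenever $r$ is comparable to $n$ --- and the theorem is specifically a high-rank result, with $r \gtrsim 4b$ and $r$ as large as $n$ in the experiments. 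The observed block then has rank $\min(|L_\ell|,|U_\ell|) < r$, its low-rank factorization is non-unique, and neither $A(L_\ell,:)$ nor $B(U_\ell,:)$ can be read off. So the preprocessing you lean on breaks exactly where the theorem lives.

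What the paper does instead is to operate on whole (partially unknown) slices rather than on their observed sub-blocks. Writing $M_i = AD^c_{k_i}B^\top + X_i$ with $X_i$ supported on the unknown pattern, orthogonality of $B$ gives the exact algebraic identity $(M_i-X_i)(M_j-X_j)^\top - (M_j-X_j)(M_i-X_i)^\top = 0$. Restricting this to entries outside the support of the quadratic cross-terms $X_iX_j^\top - X_jX_i^\top$ yields a genuinely \emph{linear} coupled Sylvester system in the $X_i$ (equation \eqref{eq:solve_N}/\eqref{eq:sylvester}), whose full column rank is established generically by explicit construction (Lemma~\ref{lemma:count} and Appendix~A). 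A second Sylvester system with the product order reversed (\eqref{eq:solve_N2nd}, Lemma~\ref{lemma:unique2nd}) recovers the remaining unknown rows of a few slices. Only \emph{then} --- with whole slices of rank $r$ in hand --- does Jennrich's algorithm extract $A$ and $B$, and finally $C$ is recovered row by row via \eqref{eq:system_last} (Lemma~\ref{lemma:solve3d}). Your dimension count in step (i) gives only a necessary order-of-magnitude condition, not the affine bounds $n\ge\alpha_1 b+\beta_1$, $r\ge\alpha_2 b+\beta_2$; the paper derives both an explicit sufficient bound and a matching-order necessary bound from Lemma~\ref{lemma:count}, part (2). The linear convergence of the alternating sweep is also not a Richardson argument in the paper --- it comes from \cite[Prop.~3.4]{luo1993error} applied to the strongly convex least-squares functional once full column rank of the Sylvester operator is established.

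In short: your high-level sketch reaches for the right tools, but without the paper's trick of building a \emph{linear} system from the commutator identity on full slices, you cannot escape the rank deficiency of the observed blocks, and the argument collapses precisely in the high-rank setting that makes the theorem interesting.
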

The iterative recovery algorithm is presented in the next section. We would again like to emphasize that the rank bound in \eqref{eq:main_rank} enforces a high-rank assumption, which is principally the opposite of typical low-rank assumptions for tensor and matrix completion.

The proof of the theorem is constructive, and upper bounds on the constants $\alpha_1, \beta_1, \alpha_2, \beta_2$ can be read out from the proof. We also note lower bounds of the kind in theorem~\ref{thm:main} are required for our approach. In fact, we will show that tensors $T$ cannot be recovered with this approach without similar bounds, which leads to corresponding lower bounds. More precisely, the proof results in the bounds
\begin{equation}\label{eq:main_bounds}
\begin{split}
8(4b+2) &\geq \alpha_1 b + \beta_1 \geq \text{max} \! \left( \! 12b+7, \frac{93b - 69 + \sqrt{1419b^2 - 2704b + 921}}{10}  \right) ,\\
3(4b+2) &\geq \alpha_2 b + \beta_2 \geq 4b+1  .
\end{split}
\end{equation}

For large $b$, the lower bound on $n$ in \eqref{eq:main_bounds} becomes
\begin{equation}
n \geq \left(\frac{93}{10} + \frac{\sqrt{1419}}{10}\right)b - \frac{1352}{10\sqrt{1419}} - \frac{69}{10} + O\left(\frac{1}{b}\right) \approx 13.07b - 10.49 + O\left(\frac{1}{b}\right).
\end{equation}

However, the upper bounds in \eqref{eq:main_bounds} seem pessimistic empirically, and numerical experiments show that our techniques can in fact recover a sizable percentage of unknown elements, as shown in table~\ref{table:unknown_percentage}.

\begin{table}[ht]
\centering
\caption{The minimally admissible value of $n$ from \eqref{eq:main_bound} for different values of $b$, together with the percentage of unknown elements in the resulting tensor.}
\label{table:unknown_percentage}
\begin{tabular}{c|ccccc}
\hline\hline
$b$ & 1 & 3 & 5 & 7 &10   \\
\hline
$\text{Minimally admissible $n$}$ & 19 & 43& 68 & 94 &134  \\
Unknown elements ($\%$) & 40.5 & 41.4& 41.1& 40.7 & 40.0  \\
$\text{Minimally admissible $r$}$ &7& 15 & 23 &30 & 42
\end{tabular}
\end{table}

\section{Algorithm}\label{sec:alg_general}
\subsection{Overview}\label{sec:algorithm}
This section presents an algorithm to recover the tensor in theorem~\ref{thm:main}. The steps in the algorithm are as follows:
\begin{enumerate}
\item For a positive integer $m$, choose $m$ distinguished slices of $S$ (equation~\eqref{ex:choose_slices}). Using these slices, construct a set of $\binom{m}{2}$ coupled Sylvester-like equations (equation~\eqref{eq:solve_N}).
\item Solve the linear equations in the previous step, using an alternating least-squares procedure to accelerate convergence. This recovers parts of the $m$ chosen slices.
\item Construct a second set of coupled Sylvester-like equations (equation~\eqref{eq:solve_N2nd}) to recover the remaining entries of a subset of the $m$ distinguished slices.
\item Use the reconstructed slices to recover the vectors $a_k$ and $b_k$ for $k=1,\ldots , r$.
\item Recover the vectors $c_k$ by solving one linear system per slice.
\end{enumerate}

Sections~\ref{sec:details1}-\ref{sec:details5} below present the details of each step. We first introduce some notation. Write $A \in \mathbb{R}^{n\times r}$ for the matrix obtained by stacking the $r$ vectors $a_k$, $k = 1, \ldots , r$, side-by-side, and similarly for $B\in \mathbb{R}^{n\times r}$ and $C \in \mathbb{R}^{n\times r}$, i.e.,
\begin{align}
A &= \begin{bmatrix}
| & | & & | \\
a_1 & a_2 & \ldots & a_r \\
| & | & & |
\end{bmatrix} \! , \, \, \,
B = \begin{bmatrix}
| & | & & | \\
b_1 & b_2 & \ldots & b_r \\
| & | & & |
\end{bmatrix} \! , \, \, \,
C = \begin{bmatrix}
| & | & & | \\
c_1 & c_2 & \ldots & c_r \\
| & | & & |
\end{bmatrix}.
\end{align}

Define also a set of diagonal matrices $D^c_{\ell} \in \mathbb{R}^{r\times r}$ for $\ell = 1, \ldots , n$ by
\begin{equation}
D^c_{\ell} = \begin{bmatrix}
c_1(\ell) & \\
& c_2(\ell) & \\
& & \ddots & \\
& & & c_r(\ell)
\end{bmatrix} .
\end{equation}

We next define two index sets. Define $\mathcal{I}_k(b)$, for $k=1, \ldots , n$, by
\begin{equation}
\mathcal{I}_k(b) = \llbracket \text{max}(1, k - b), \text{min}(n,k+b) \rrbracket.
\end{equation}
Define also $\mathcal{D}(b)$ to be the indices contained in a band around the diagonal of width $b$ in the $x$- and $y$-directions, i.e., 
\begin{equation}
\mathcal{D}(b) := \Big\{(i,j) \in \llbracket 1,n\rrbracket^2 : \abs{i-j} \leq b \Big\}. 
\end{equation}
For each $k = 1, \ldots , n$, we let $\mathcal{U}(k)$ denote the set of matrices with support contained in the set of unknown elements of the $k$th slice of $T$. More precisely, let
\begin{equation}
\mathcal{U}(k) := \Big\{ M \in \mathbb{R}^{n\times n}: \text{supp}(M) \subseteq \mathcal{D}(b) \cup \Big( \mathcal{I}_k(b) \times \llbracket 1, n\rrbracket \Big) \cup \Big( \llbracket 1, n \rrbracket  \times \mathcal{I}_k(b) \Big) \Big\}.
\end{equation}

\subsection{Details of algorithm}
This section presents the details of the algorithm in section~\ref{sec:algorithm}.
\subsubsection{Step 1: recover parts of $m$ distinguished slices of $T$}\label{sec:details1}
We consider $m$ slices $k_1, \ldots , k_m$ of $S$, chosen so that the corresponding index sets $\mathcal{I}_{k_1}(b)$, $\mathcal{I}_{k_2}(b)$, $\ldots$, $\mathcal{I}_{k_m}(b)$ are disjoint. Explicitly, we take
\begin{equation}\label{ex:choose_slices}
k_i = (2b+1)(i-1)+1, \quad  \text{ for } i \in \llbracket 1, m \rrbracket .
\end{equation}
Note that this is possible when
\begin{equation}\label{eq:cond_disjoint}
(2b+1)(m-1)+1 \leq n.
\end{equation}

We will see below that recovery is guaranteed for any $m \geq 5$. Nonetheless, a higher value of $m$ provides more information to the algorithm and one therefore expects this to lead to possible recovery for larger values of $b$, compared to $n$ and $r$. However, the disjointness condition in \eqref{eq:cond_disjoint} introduces a bound on the highest possible value of $m$ in terms of $n$ and $b$, meaning that increasing $m$ only increases recovery performance up to a point.

From slices $k_1, \ldots , k_m$ of $S$, we have access to the matrices 
\begin{align} \label{eq:two_slices}
M_{i} &:= A D^c_{k_i}B^T + X_{i},  \quad \text{ for } i \in \llbracket 1,m \rrbracket ,
\end{align}
where $X_{i} \in \mathcal{U}(k_i)$, $i\in \llbracket 1,m \rrbracket$ are unknown matrices. The first step of the algorithm recovers parts of the $X_{i}$ from a set of Sylvester-type equations that we now construct.

For each pair $M_{i}, M_{j}$ for $i,j \in \llbracket 1, m\rrbracket, i \neq j$, note that
\begin{equation}\label{eq:solve_setup}
\begin{split}
(M_{i} - X_{i})\cdot (M_{j} - X_{j})^T - &(M_{j} - X_{j})\cdot (M_{i} - X_{i})^T \\
&= AD^c_{k_i}D^c_{k_j}A^T - AD^c_{k_j}D^c_{k_i}A^T = 0.
\end{split}
\end{equation}
Expanding the left hand side therefore results in the set of equations
\begin{equation}\label{eq:solve_N}
\begin{split}
X_{i}M_{j}^T - M_{j}X_{i}^T + M_{i}X_{j}^T &- X_{j}M_{i}^T  + X_{j}X_{i}^T - X_{i}X_{j}^T \\
&= M_{i}M_{j}^T - M_{j}M_{i} ^T, \quad i,j \in \llbracket 1, m\rrbracket.
\end{split}
\end{equation}
We will next restrict these sets of equations to the complement of the support of the quadratic terms $X_{j}X_{i}^T - X_{i}X_{j}^T$. The resulting linear equations will then be used to determine parts of the $X_{i}$ uniquely. We first write down the support of the quadratic terms, as follows.
\begin{lemma}
Define the index set 
\begin{equation}
\begin{split}
\mathcal{I}_{ij}(b) := \mathcal{D}(2b) & \cup \Big( \mathcal{I}_i(2b) \times \llbracket 1, n\rrbracket \Big) \cup \Big(  \llbracket 1, n\rrbracket \times \mathcal{I}_i(2b) \Big)  \\
& \cup \Big( \mathcal{I}_j(2b) \times \llbracket 1, n\rrbracket \Big) \cup \Big(  \llbracket 1, n\rrbracket \times \mathcal{I}_j(2b) \Big) .
\end{split}
\end{equation}
For any two $N_i \in \mathcal{U}(i)$, $N_j \in \mathcal{U}(j)$ with $\mathcal{I}_{i} (b) \cap \mathcal{I}_{j} (b) = \emptyset$, we have
\begin{equation}
\text{supp}(N_jN_i^T - N_iN_j^T ) \subseteq \mathcal{I}_{ij}(b).
\end{equation}
\end{lemma}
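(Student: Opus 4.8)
The plan is to compute the support of a product $N_j N_i^T$ where $N_j \in \mathcal{U}(j)$ and $N_i \in \mathcal{U}(i)$, by tracking how row-supports and column-supports of the two factors interact. Recall that $(N_j N_i^T)(p,q) = \sum_\ell N_j(p,\ell) N_i(q,\ell)$, so the $(p,q)$ entry can only be nonzero if the $p$th row of $N_j$ and the $q$th row of $N_i$ share a common nonzero column index. Thus I would first describe, for a matrix $N \in \mathcal{U}(k)$, which rows can be nonzero and, for each such row, which columns the nonzeros can occupy. From the definition $\mathrm{supp}(N) \subseteq \mathcal{D}(b) \cup (\mathcal{I}_k(b)\times\llbracket 1,n\rrbracket) \cup (\llbracket 1,n\rrbracket\times\mathcal{I}_k(b))$, a row $p$ has possible nonzeros in: columns within $b$ of $p$ (from $\mathcal{D}(b)$); all columns, if $p \in \mathcal{I}_k(b)$ (the ``heavy'' rows); and columns in $\mathcal{I}_k(b)$, always (from the third term).

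Next I would split into cases according to which of the three ``types'' the nonzero entries of row $p$ of $N_j$ and row $q$ of $N_i$ come from, and in each case determine the constraint on $(p,q)$ forced by the requirement that the two rows overlap in some column $\ell$. The key arithmetic observations are: (i) if both overlapping entries come from the band $\mathcal{D}(b)$, then $|p-\ell|\le b$ and $|q-\ell|\le b$, so $|p-q|\le 2b$, giving $(p,q)\in\mathcal{D}(2b)$; (ii) if $p\in\mathcal{I}_j(b)$ is a heavy row of $N_j$, then $(p,q)$ lands in $\mathcal{I}_j(2b)\times\llbracket 1,n\rrbracket \subseteq \mathcal{I}_{ij}(b)$ regardless of $q$ — and symmetrically if $q\in\mathcal{I}_i(b)$; (iii) if the overlap column $\ell$ lies in $\mathcal{I}_j(b)$ (because row $p$ of $N_j$ has a nonzero there coming from the third term $\llbracket 1,n\rrbracket\times\mathcal{I}_j(b)$), then row $q$ of $N_i$ must also be nonzero at $\ell\in\mathcal{I}_j(b)$, which by the row/column analysis of $N_i$ forces either $q$ to be within $b$ of $\ell\in\mathcal{I}_j(b)$ — hence $q\in\mathcal{I}_j(2b)$ — or $q\in\mathcal{I}_i(b)$, or $\ell\in\mathcal{I}_i(b)$ (but the last gives $\mathcal{I}_i(b)\cap\mathcal{I}_j(b)\ne\emptyset$, excluded by hypothesis, or forces nothing new). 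Here the disjointness hypothesis $\mathcal{I}_i(b)\cap\mathcal{I}_j(b)=\emptyset$ is used to rule out the genuinely degenerate overlap possibilities so that every surviving case lands in one of the five pieces of $\mathcal{I}_{ij}(b)$; I would need to check carefully whether it is actually needed or merely convenient. Having bounded $\mathrm{supp}(N_j N_i^T)$, the bound on $\mathrm{supp}(N_j N_i^T - N_i N_j^T)$ follows since $\mathcal{I}_{ij}(b)$ is symmetric under swapping $i$ and $j$ and the support of a difference is contained in the union of the supports.

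The bookkeeping is entirely elementary, so there is no deep obstacle; the main danger is a combinatorial case explosion and off-by-one errors in the band-width arithmetic (distinguishing $b$, $2b$, and the $\mathcal{I}_\cdot(b)$ versus $\mathcal{I}_\cdot(2b)$ sets), together with correctly invoking the disjointness of $\mathcal{I}_i(b)$ and $\mathcal{I}_j(b)$ to discard the overlap patterns that would otherwise escape $\mathcal{I}_{ij}(b)$. I would organize the proof as a short table or enumerated list of the $3\times 3$ combinations of entry-types for the two overlapping rows, noting that by symmetry of $p\leftrightarrow q$ and $i\leftrightarrow j$ only about half of them are distinct, and verifying that each produces an index in $\mathcal{D}(2b)$, in $\mathcal{I}_i(2b)\times\llbracket 1,n\rrbracket$ or its transpose, or in $\mathcal{I}_j(2b)\times\llbracket 1,n\rrbracket$ or its transpose. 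A clean way to present the core step is: if $(N_j N_i^T)(p,q)\ne 0$ pick $\ell$ with $N_j(p,\ell)\ne0$ and $N_i(q,\ell)\ne0$; then $(p,\ell)\in\mathcal{D}(b)\cup(\mathcal{I}_j(b)\times\llbracket1,n\rrbracket)\cup(\llbracket1,n\rrbracket\times\mathcal{I}_j(b))$ and $(q,\ell)\in\mathcal{D}(b)\cup(\mathcal{I}_i(b)\times\llbracket1,n\rrbracket)\cup(\llbracket1,n\rrbracket\times\mathcal{I}_i(b))$, and a direct check of the nine resulting conjunctions places $(p,q)$ in $\mathcal{I}_{ij}(b)$.
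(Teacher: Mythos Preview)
Your argument is correct and covers all the cases cleanly; the disjointness hypothesis is genuinely needed exactly where you flag it, namely to exclude the case $(p,\ell)\in\llbracket 1,n\rrbracket\times\mathcal{I}_j(b)$ together with $(q,\ell)\in\llbracket 1,n\rrbracket\times\mathcal{I}_i(b)$, which would otherwise leave $(p,q)$ unconstrained.

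The paper's proof reaches the same conclusion by a slightly different bookkeeping device: rather than analysing a single entry $(p,q)$ and enumerating the nine conjunctions on $(p,\ell)$ and $(q,\ell)$, it first writes each $N_k$ additively as a band matrix $D_k$ plus a sum of rank-one column blocks $x_\ell\otimes e_\ell$ and row blocks $e_\ell\otimes y_\ell$ with $\ell\in\mathcal{I}_k(b)$, then expands the product $N_iN_j^T$ bilinearly and tracks the support of each cross term (e.g.\ $D_iD_j^T\subseteq\mathcal{D}(2b)$, $(x_k\otimes e_k)D_j^T$ supported in columns $\mathcal{I}_i(2b)$, etc.). The two arguments are equivalent term-for-term: your nine entrywise cases correspond exactly to the nine cross terms in the bilinear expansion. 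Your presentation has the advantage of making the role of the disjointness hypothesis completely transparent at the single point where it is invoked; the paper's decomposition makes the structure of the product a bit more visible as a sum of low-rank and banded pieces, which is occasionally convenient elsewhere in the analysis.
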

\begin{proof}
Write
\begin{align}
N_i = D_i +  \sum_{k \in \mathcal{I}_i(b)} \left( x_k \otimes e_k + e_k \otimes y_k \right) , \,\,\, N_j = D_j + \sum_{\ell \in \mathcal{I}_j(b)} \left( z_\ell \otimes e_\ell + e_\ell \otimes w_\ell \right),  
\end{align}
for vectors $x_k,y_k,z_\ell,w_\ell$ in $\mathbb{R}^n$ and where $D_i, D_j$ have support contained in $\mathcal{D}(b)$. We start with the term $N_iN_j^T$. In the expansion of the product $N_iN_j^T - N_jN_i^T$, the cross terms $D_iD_j^T$ and $D_jD_i^T$ have support contained in $\mathcal{D}(2b)$. The terms $D_i\left( z_\ell \otimes e_\ell + e_\ell \otimes w_\ell \right)$ have support in the rows in $\mathcal{I}_j(2b)$. Likewise, the terms $(x_k \otimes e_k) D_j^T$ have support in the columns with indices in $\llbracket \text{min}(\mathcal{I}_i(b)) - b , \text{max}(\mathcal{I}_i(b)) + b\rrbracket $, i.e., in $\mathcal{I}_i(2b)$. It follows that the terms $(x_k \otimes e_k) N_j^T$ have support in the columns with indices in $\mathcal{I}_i(2b) \cup \mathcal{I}_j(b)$, since $\mathcal{I}_i(b)$ and $\mathcal{I}_j(b)$ have empty intersection. Similarly, the terms $(e_k \otimes y_k) N_j^T$ have support contained in the rows with indices in $\mathcal{I}_i(b)$.

For the term $N_jN_i^T$, transposing the support of the term $N_iN_j^T$ and exchanging $i$ and $j$ concludes the proof.
\end{proof}

Restricting \eqref{eq:solve_N} to only the entries contained in the complement of $\mathcal{I}_{k_ik_j}(b)$ therefore results in a linear equation in the elements of $X_{i}$ and $X_{j}$. Note that only the rows of $X_{i}$ with indices not contained in $\mathcal{I}_{ k_i}(2b)$ appear in any of these equations. We therefore use the collection of these equations for all $i,j \in \llbracket 1, m\rrbracket$ to determine the rows of $X_{i}$ with indices not contained in $\mathcal{I}_{ k_i}(2b)$, as in the following lemma.

\begin{lemma}\label{lemma:count}
Let $X_i$ be matrices in $\mathcal{U}(k_i)$, for $i=1, \ldots , m$ and define $M_{i}$ in $\mathbb{R}^{n\times n}$ as in \eqref{eq:two_slices}. Assume $m \geq 5$.
\begin{enumerate}
\item For generically chosen matrices $A,B,C$ in $\mathbb{R}^{n\times r}$ satisfying assumption~\ref{ass:ON}, the equations
\begin{equation}\label{eq:sylvester}
\begin{split}
X_{i}M_{j}^T - M_{j}X_{i}^T &+ M_{i}X_{j}^T - X_{j}M_{i}^T\Big|_{\mathcal{I}_{k_ik_j}(b)^\complement}   \\
&= M_{i}M_{j}^T - M_{j}M_{i} ^T\Big|_{\mathcal{I}_{k_ik_j}(b)^\complement} , \quad i,j \in \llbracket 1, m\rrbracket , 
\end{split}
\end{equation}
uniquely determine the rows of $X_{i}$ with indices not contained in $\mathcal{I}_{k_i}(2b)$, provided
\begin{equation}\label{eq:cond_lemma}
\begin{split}
n &\geq 8(4b+2), \\
r &\geq 3(4b+2).
\end{split}
\end{equation}
\item If either
\begin{equation}\label{eq:cond_lemma_converse}
\begin{split}
n &\leq \text{max}\left(12b+7, \frac{93}{10}b + \frac{\sqrt{1419b^2 - 2704b + 921}}{10} - \frac{69}{10} \right) \quad \text{ or } \\
r &\leq 4b+1,
\end{split}
\end{equation}
then \eqref{eq:sylvester} has a non-trivial kernel when restricted to the rows of $X_{i}$ with indices not contained in $\mathcal{I}_{k_i}(2b)$.
\end{enumerate}
\end{lemma}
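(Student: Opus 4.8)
The strategy is to recast part (1) as an injectivity statement. If $\{X_i\}_{i=1}^m$ and $\{X_i'\}_{i=1}^m$ both solve the linear system \eqref{eq:sylvester}, their difference $Y_i:=X_i-X_i'\in\mathcal{U}(k_i)$ solves the homogeneous system $Y_iM_j^T-M_jY_i^T+M_iY_j^T-Y_jM_i^T\big|_{\mathcal{I}_{k_ik_j}(b)^\complement}=0$. Writing $M_i=AD^c_{k_i}B^T+X_i$ and applying the preceding lemma to the pairs $(Y_i,X_j)$ and $(X_i,Y_j)$, the mixed products $Y_iX_j^T-X_jY_i^T$ and $X_iY_j^T-Y_jX_i^T$ have support inside $\mathcal{I}_{k_ik_j}(b)$, so on its complement the homogeneous system reduces to
\begin{equation*}
Y_iP_j^T-P_jY_i^T+P_iY_j^T-Y_jP_i^T=0,\qquad P_i:=AD^c_{k_i}B^T .
\end{equation*}
Since an index pair $(\ell,p)$ lies in $\mathcal{I}_{k_ik_j}(b)^\complement$ only when $\ell,p\notin\mathcal{I}_{k_i}(2b)\cup\mathcal{I}_{k_j}(2b)$, every row of $Y_i$ or $Y_j$ appearing above is one of the rows we wish to recover, so the system is \emph{closed} on the ``target rows'' $[Y_i]_\ell$ with $\ell\notin\mathcal{I}_{k_i}(2b)$, each of which is supported on at most $4b+2$ columns, namely $\{p:\abs{\ell-p}\le b\}\cup\mathcal{I}_{k_i}(b)$. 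Part (1) is exactly the assertion that this closed homogeneous system has only the trivial solution.

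For part (1), the system's coefficients are polynomials in the entries of $A,B,C$, so vanishing of its kernel is a Zariski-open condition and may be checked at a single convenient configuration (or verified for generic $A,B,C$ directly). It suffices to take $m=5$, as additional slices only add equations. The combinatorial core is a rank count: for $m\ge5$ the equations furnish, in each coupled block of target-row unknowns, more constraints than unknowns --- in the simplest blocks, $\binom{m}{2}$ equations couple the $2m$ scalars $\{Y_i(\ell,p),Y_i(p,\ell)\}_{i=1}^m$ attached to a pair of indices $\ell,p$ with $\abs{\ell-p}>2b$ --- and genericity of $C$, through the coefficients $c_k(k_i)$, promotes this to full column rank. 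The constants in \eqref{eq:cond_lemma} are bookkeeping: deleting the forbidden bands removes $O(b)$ rows and columns, so $n\ge8(4b+2)$ leaves enough ``clean'' columns for the equations at $(\ell,p)$, $\abs{\ell-p}>2b$, to reach every coordinate of every target row, while $r\ge3(4b+2)$ (the factor $3$ absorbing the columns lost to the forbidden bands) guarantees that the generic vectors $[P_j]_p=\sum_k A(p,k)c_k(k_j)b_k$, restricted to the $\le4b+2$ potential-support columns of a target row, span that coordinate subspace. I expect the main obstacle to be exactly this propagation --- using the ``far'' equations ($\abs{\ell-p}>2b$) to pin down the \emph{near-diagonal} entries of a target row, where no equation sits directly --- together with ruling out accidental rank deficiency in the coupled linear system; the explicit constants fall out of making the propagation quantitative.

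For part (2), the $r$-bound follows from the same picture. If $r\le4b+1$, choose $i$ and a target index $\ell\notin\mathcal{I}_{k_i}(2b)$ whose potential support $S:=\{p:\abs{\ell-p}\le b\}\cup\mathcal{I}_{k_i}(b)$ has the full size $\abs{S}=4b+2>r$, and pick a nonzero $v$ supported on $S$ with $v\perp V:=\operatorname{span}\{b_1,\dots,b_r\}$, which exists since $\dim(\mathbb{R}^{S}\cap V^\perp)\ge(4b+2)-r\ge1$; then setting $[Y_i]_\ell=v$ and all other target rows to zero satisfies every equation, because each surviving term is an inner product of $v$ against a vector of $V$. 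The $n$-bound is a counting argument: the number of target-row unknowns --- which becomes quadratic in $n$ once the maximal number of slices permitted by \eqref{eq:cond_disjoint} is used --- must be set against an upper bound on the rank of the equation system, and solving the resulting inequality produces the quadratic threshold, the maximum of two expressions reflecting a coarse (linear) bound $12b+7$ and a sharper (quadratic) one. The delicate step here is not the algebra but proving the rank upper bound tight enough --- i.e. producing enough linear dependencies among the equations --- that the nontrivial kernel genuinely survives down to the stated value of $n$.
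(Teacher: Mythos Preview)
Your high-level framework is right, and several pieces are solid. The reduction of the homogeneous system from $M_i$ to $P_i = AD^c_{k_i}B^T$ on $\mathcal{I}_{k_ik_j}(b)^\complement$ is correct and clean, and your Part~(2) argument for the $r$-bound is essentially the paper's: it too exhibits a single nonzero row (a column, in the paper's phrasing) orthogonal to the span of the $b_k$.

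For the $n$-bound in Part~(2), the idea is right but you over-complicate the ``delicate step''. The paper's argument is a literal count: it writes down closed formulas for the number of unknowns $\sum_i n_i$ and for the number of equations retained in \eqref{eq:sylvester} (after accounting for antisymmetry), observes that the optimal number of slices is $m=7$, and solves for the threshold at which unknowns exceed equations. No rank upper bound or manufactured linear dependencies are needed --- once the equation count drops below the unknown count, the kernel is automatically nontrivial.

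The genuine gap is Part~(1). Your ``simplest block'' description is incorrect: the equation at position $(\ell,p)$ does \emph{not} couple only the $2m$ scalars $Y_i(\ell,p),\,Y_i(p,\ell)$; it involves the full rows $[Y_i]_\ell,[Y_i]_p,[Y_j]_\ell,[Y_j]_p$, each carrying up to $4b+2$ unknowns, and in any case most of the scalars $Y_i(\ell,p)$ vanish structurally unless $\ell$ or $p$ lies in $\mathcal{I}_{k_i}(b)$. So the block picture you sketch does not decouple the system, and the ``propagation'' you correctly identify as the obstacle has no mechanism. The paper's route is quite different: rather than arguing at a generic point, it exploits the Zariski-open observation by constructing one highly degenerate instance. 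It picks disjoint index sets $J_1,J_3,J_5\subset\llbracket1,n\rrbracket$, each of size $4b+2$ and with prescribed gaps --- this is exactly where $n\ge8(4b+2)$ and $r\ge3(4b+2)$ come from --- and chooses $A$ and the $D^c_{k_i}$ so that $M_i$ is supported only in the rows indexed by $J_i$. With this block-row structure, entries of \eqref{eq:sylvester_kernel} outside $(J_i\cup J_j)\times(J_i\cup J_j)$ force most columns of $X_i^T$ to zero directly, and the residual columns are handled by a short auxiliary lemma of the form ``for generic $A_1,A_2,A_3$, the coupled system $A_2Y_1=-(A_1Y_2)^T$, $A_3Y_1=-(A_1Y_3)^T$, $A_3Y_2=-(A_2Y_3)^T$ with disjointly row-supported $Y_i$ forces $Y_1=Y_2=Y_3=0$''. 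That specific construction is the missing engine in your plan.
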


A proof is given in appendix~\ref{sec:proof_lemma_count}. Note that the second part of lemma~\ref{lemma:count} shows that high-rank assumptions of the type in \eqref{eq:cond_lemma} are indeed required in our approach.

\subsubsection{Step 2: solve the linear systems using alternating least-squares}\label{sec:details2}
The coupled linear systems in \eqref{eq:sylvester} constitute an overdetermined system of $O(n^2)$ equations in $O(nb)$ unknowns. A direct solution would therefore have complexity $O(n^4b^2)$. However, because of the particular structure of the system, convergence can be accelerated through an alternating minimization strategy. The least-squares solution of the linear system amounts to minimizing the Euclidean norm of the residual. The proof of lemma~\ref{lemma:count} will show that the associated linear system has full column rank, for generic choices of $A,B,C$, so this cost function is strongly convex. The minimizer of this problem can therefore be found by alternatingly minimizing the residual with respect to a subset of the variables \cite[Proposition 3.4]{luo1993error}. Iterating this procedure results in convergence to the solution of the linear system.

In detail, we alternate over each matrix $X_{i}$ for $i \in \llbracket 1, m\rrbracket $ in turn. Denote the $t$th iterate by $X_{i}^{(t)}$. For each $i$ and $t$, we in turn update each row, $[X_{i}^{(t)}]_\ell$, of $X_{i}^{(t)}$ for $\ell$ not contained in $\mathcal{I}_{k_i}(2b)$. Denote the result of updating rows $\llbracket 1, \ell - 1\rrbracket \smallsetminus \mathcal{I}_{k_i}(2b)$ of $X_{i}^{(t)}$ by $X_{i}^{(t,\ell-1)}$. The update equation of the $\ell$th row reads as
\begin{equation}\label{eq:update}
\begin{split}
[X_{i}^{(t+1)}]_\ell = & \argmin_{[X_{i}^{(t)}]_\ell} \sum_{\substack{ j=1 \\j \neq i}}^m  \Big\| X_{i}^{(t,\ell-1)}M_{j}^T - M_{j}X_{i}^{(t,\ell-1) T} \\
&+ M_{i}X_{j}^{(t,\ell-1)T} - X_{j}^{(t,\ell-1)}M_{i}^T - \left( M_{i}M_{j}^T - M_{j}M_{i} ^T\right)\Big|_{\mathcal{I}_{k_i k_j}{(b)}^\complement}  \Big\|^2 .
\end{split}
\end{equation}

The variables in $[X_{i}^{(t,\ell-1)}]_\ell$ only appear in the $\ell$th row and column of the matrix 
\begin{equation*}
X_{i}^{(t,\ell-1)}M_{j}^T - M_{j}X_{i}^{(t,\ell-1) T} + M_{i}X_{j}^{(t,\ell-1)T} - X_{j}^{(t,\ell-1)}M_{i}^T - \left( M_{i}M_{j}^T - M_{j}M_{i} ^T\right)\Big|_{\mathcal{I}_{k_i k_j}{(b)}^\complement}.
\end{equation*}
The update equation \eqref{eq:update} is therefore a least squares problem in $O(b)$ unknowns and $O(n)$ equations. It can be solved with cost $O(nb^2)$. The total complexity of the alternating procedure therefore becomes $O(n^2b^2N)$, where $N$ is the number of iterations used.

\subsubsection{Step 3: recover remaining entries of a subset of the distinguished slices of $T$} \label{sec:details3}
Next, we recover the remaining entries of $X_{i}$, for $i$ contained in a subset $I$ of $\llbracket 1, m \rrbracket$. We will require that $I$ satisfies
\begin{align}
&\text{$\abs{I} \geq 3$,} \label{eq:defI1}\\
&\text{For all $i,j \in I$ with $i \neq j$, $\mathcal{I}_{k_i}(2b) \cap \mathcal{I}_{k_j}(2b) = \emptyset$.} \label{eq:defI2}
\end{align}

Note the multiplicative factor $2$ in the second requirement. As an example, these requirements are necessarily satisfied for $ I = \{1,3,5\}$, if $m \geq 5$. A larger value of $\abs{I}$ will guarantee recovery under more beneficial bounds of $n$ and $r$ in terms of $b$, but we will phrase our results in terms of the limiting case $\abs{I} = 3$.

At this stage, the $i$th slice $M_{i}$ has all entries known except for elements in the rows with indices contained in $\mathcal{I}_{k_i}(2b)$. This is illustrated in figure~\ref{fig:fig_pattern_2nd}.

\begin{figure}
\centering
\includegraphics[width=\textwidth]{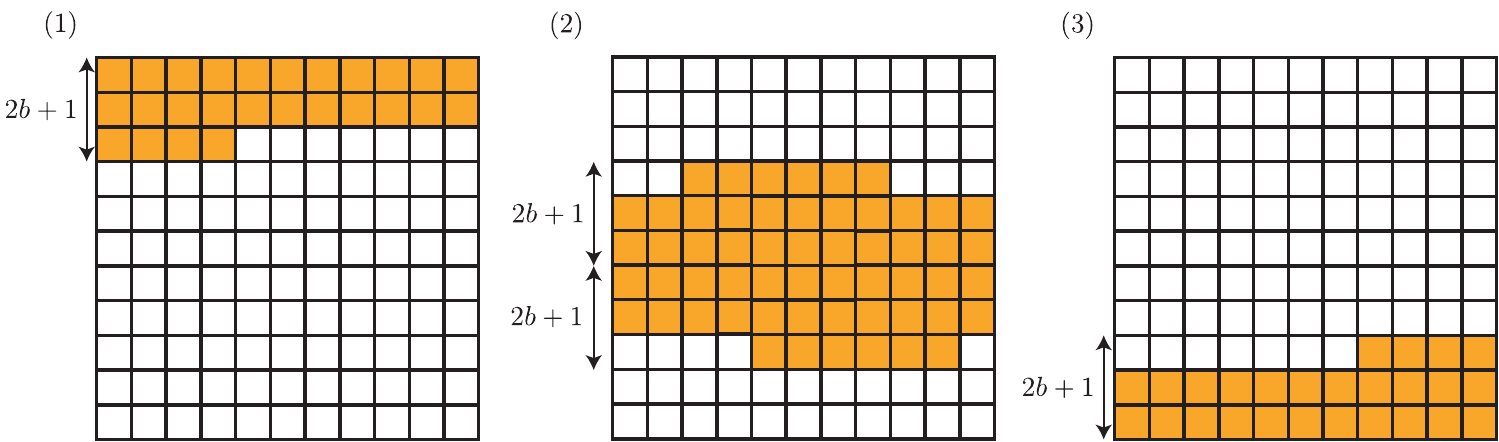}
\caption{Illustration of the remaining unknown elements in section~\ref{sec:details3}. Compare to figure~\ref{fig:fig_pattern}. Left: first slice of $E$. Middle: $\ell$th slice of $E$, for $b \leq \ell \leq n-b$. Right: $n$th slice of $E$.}
\label{fig:fig_pattern_2nd}
\end{figure}

We will recover these rows, for slices $k_i$ with $i$ contained in $I$. For this purpose, we construct the system of equations
\begin{equation}\label{eq:solve_N2nd}
(M_{i} - X_{i})^T\cdot (M_{j} - X_{j}) - (M_{j} - X_{j})^T\cdot (M_{i} - X_{i}) = 0, \quad i,j \in I,
\end{equation}
where each $X_{i}$ is contained in the subspace
\begin{equation}
\Big\{ X_i \in \mathcal{U}(k_i): \text{supp}(X_i) \subseteq  \mathcal{I}_{k_i}(2b) \times \llbracket 1, n\rrbracket  \Big\}.
\end{equation}

Note that the order of the terms within each product in \eqref{eq:solve_N2nd} is reversed, as compared to \eqref{eq:solve_setup}. Moreover, the quadratic cross terms in \eqref{eq:solve_N2nd} satisfy
\begin{align}
X_{i}^T X_{j}  = 0 = X_{j}^T X_{i}, \quad i,j \in  I , i \neq j,
\end{align}
since the sets $\mathcal{I}_{k_i}(2b)$ and $\mathcal{I}_{k_j}(2b)$ are disjoint by the second assumption on the set $I$. Equation~\eqref{eq:solve_N2nd} is therefore a set of coupled linear systems for the unknowns $X_{i}$. We will show that they determine the $X_{i}$ uniquely. 

We next present the main result of this section.
\begin{lemma}\label{lemma:unique2nd}
For generic $A$ and $B$ and $I$ satisfying \eqref{eq:defI1}--\eqref{eq:defI2}, equation~\eqref{eq:solve_N2nd} has a unique solution, provided that
\begin{equation}\label{eq:rank2nd}
\begin{split}
n &\geq 16b+4 \\
r &\geq 8b+4 
\end{split}
\end{equation}
\end{lemma}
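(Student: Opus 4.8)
The plan is to reduce the coupled system~\eqref{eq:solve_N2nd} to a statement about the kernel of a linear map on the unknown rows, and then show this kernel is trivial for generic $A,B$ under the stated bounds. First I would expand~\eqref{eq:solve_N2nd} using $M_i = A D^c_{k_i} B^T + X_i$ and the vanishing of the quadratic cross terms $X_i^T X_j = X_j^T X_i = 0$. Writing $P_{ij} := A D^c_{k_i} D^c_{k_j} A^T$ — wait, the order is reversed here, so the relevant identity is $(M_i - X_i)^T(M_j - X_j) = B D^c_{k_i} A^T A D^c_{k_j} B^T$, which is \emph{not} automatically symmetric unless $A^T A$ is diagonal; but Assumption~\ref{ass:ON} gives exactly that $\{a_k\}$ is orthonormal, so $A^T A = I$ and the right-hand side is $B D^c_{k_i} D^c_{k_j} B^T$, symmetric in $i,j$. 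Hence the difference in~\eqref{eq:solve_N2nd} vanishes as claimed, and expanding the left-hand side yields, for each pair $i,j \in I$, a linear equation
\begin{equation*}
X_i^T M_j - M_j^T X_i + M_i^T X_j - X_j^T M_i = M_j^T M_i - M_i^T M_j,
\end{equation*}
in the unknown blocks $X_i$ supported on rows $\mathcal{I}_{k_i}(2b) \times \llbracket 1,n \rrbracket$. Since the supports $\mathcal{I}_{k_i}(2b)$ are disjoint for $i \in I$, the cross terms $X_i^T M_j$ and $M_j^T X_i$ occupy disjoint sets of rows/columns, so no cancellation between distinct unknown blocks can hide information.

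Next I would establish uniqueness by showing the homogeneous system (replace the right-hand side by $0$) has only the zero solution. Suppose $\{X_i\}_{i \in I}$ solves it; the homogeneous equations read $X_i^T M_j - M_j^T X_i = M_i^T X_j - X_j^T M_i$ wait, more precisely $X_i^T M_j - M_j^T X_i + M_i^T X_j - X_j^T M_i = 0$. I would restrict attention to a fixed index $i_0 \in I$ and exploit that $X_{i_0}$ is the only unknown whose support meets rows $\mathcal{I}_{k_{i_0}}(2b)$: restricting the equation for the pair $(i_0,j)$ to the columns indexed by $\mathcal{I}_{k_{i_0}}(2b)$ isolates a relation involving only $X_{i_0}$ (through $X_{i_0}^T M_j$ and $M_j^T X_{i_0}$) plus terms $M_{i_0}^T X_j$ whose contribution on those columns can be controlled because $X_j$ is supported on the disjoint row-band $\mathcal{I}_{k_j}(2b)$. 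Using two distinct indices $j_1, j_2 \ne i_0$ (available since $|I| \ge 3$) gives two such relations, and combining them — in the spirit of the Sylvester-equation argument already used for Lemma~\ref{lemma:count} — lets me write the nonzero rows of $X_{i_0}$ as the solution of a homogeneous linear system whose coefficient matrix is built from the columns of $B D^c_{k_{i_0}}$ and the matrices $M_{j_1}, M_{j_2}$. The genericity of $A,B$ then forces this coefficient matrix to have full column rank, hence $X_{i_0} = 0$; since $i_0 \in I$ was arbitrary, all $X_i$ vanish.

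The core of the argument is the dimension count that produces the bounds $n \ge 16b+4$ and $r \ge 8b+4$. Each unknown block $X_i$ has $(4b+1)$ nonzero rows (indices in $\mathcal{I}_{k_i}(2b)$, of width $4b+1$), hence $O(nb)$ scalar unknowns; the pairwise equations, restricted to the complement of the already-determined region and to the relevant column-bands, supply $\Theta(nb)$ constraints per pair, and one must check the constraint matrix has no rank deficiency. Concretely I expect the count to come down to requiring that, on the band of $4b+1$ columns where $X_{i_0}$ lives, the combined operator $[\,\cdot\, M_{j_1}^T\ |\ \cdot\, M_{j_2}^T\,]$ restricted appropriately is injective on $\mathbb{R}^{(4b+1)\times n}$; using $A^TA = I$ one rewrites the image in terms of $B D^c_{k_{i_0}} D^c_{k_j} B^T$ and the known error-supported parts, and injectivity holds once $n$ exceeds roughly four times the band-width (giving $n \ge 16b+4$) and once $r$ is large enough that the $r$ columns of $B$ span enough directions to see all $4b+1$ unknown rows through two slices (giving $r \ge 8b+4$). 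The main obstacle, as in Lemma~\ref{lemma:count}, is verifying that the genericity of $A,B$ genuinely eliminates the kernel rather than merely making it "generically small": this requires exhibiting a single explicit choice of $A,B$ (e.g.\ built from a Vandermonde-type construction, so that the relevant minors are nonzero polynomials in the entries) for which the constraint matrix attains full column rank under~\eqref{eq:rank2nd}, after which the Zariski-openness of the full-rank locus finishes the proof.
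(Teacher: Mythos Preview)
Your proposal has a genuine gap at the step where you ``restrict the equation for the pair $(i_0,j)$ to the columns indexed by $\mathcal{I}_{k_{i_0}}(2b)$'' in order to isolate $X_{i_0}$. This restriction does not kill the $X_j$-terms: both $M_{i_0}^T X_j$ and $X_j^T M_{i_0}$ are generically nonzero on those columns, because $X_j$ has nonzero entries in \emph{every} column (it is the row-support $\mathcal{I}_{k_j}(2b)$ that is constrained, not the column-support). So after your restriction you still face a coupled system in $X_{i_0}$ and $X_j$, and the subsequent plan to ``write the nonzero rows of $X_{i_0}$ as the solution of a homogeneous linear system'' never actually decouples. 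Your dimension heuristics (``roughly four times the band-width'', ``$r$ large enough \ldots\ through two slices'') likewise do not produce the specific constants $16b+4$ and $8b+4$ from any concrete count; they are post-hoc rationalisations of the statement rather than derivations.

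The paper's argument rests on an algebraic observation you are missing. The homogeneous kernel equation says precisely that $M_i^T X_j - M_j^T X_i$ is a \emph{symmetric} matrix. The paper proves a short linear-algebra lemma: if $M$ has linearly independent columns and $MX$ is symmetric, then $X^T = MS$ for some symmetric $S$. Applying this (after restricting to rows and columns outside $\mathcal{I}_{k_j}(3b)$, so that the relevant block has full column rank) forces the transposed rows of $X_i$ to lie in the column space of an explicit matrix built from $M_i$ and $M_j$. The proof then runs in two stages: first the ``outer'' rows of $X_i$ with indices in $\mathcal{I}_{k_i}(2b)\smallsetminus\mathcal{I}_{k_i}(b)$ are shown to vanish using a single pair $i,j$ (this step produces $n\ge 16b+4$); then, with each $X_i$ supported only on $\mathcal{I}_{k_i}(b)$, the remaining rows are killed by intersecting the column spaces arising from two distinct choices $j,\ell\neq i$ (this is where $\abs{I}\ge 3$ enters and where $r\ge 8b+4$ appears). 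The paper in fact remarks that a Lemma~\ref{lemma:count}-style explicit-example argument is possible but yields looser bounds; the symmetry-switching trick is exactly what delivers the constants in~\eqref{eq:rank2nd}.
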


The bound on $r$ in \eqref{eq:rank2nd} is slightly pessimistic and we observe a looser bound in the numerical experiments below. Equation~\eqref{eq:solve_N2nd} can be solved in an alternating fashion similar to \eqref{eq:update} and we omit the details of this step.

\subsubsection{Step 4: use the recovered slices to recover $a_k$, $b_k$}\label{sec:details4}
From the $\abs{I}$ distinguished slices recovered in the previous step, we can use Jennrich's algorithm \cite{harshman1970foundations} to recover the vectors $a_k$, $b_k$. In detail, write $\widehat{T} \in \mathbb{R}^{n\times n \times \abs{I}}$ as the tensor obtained by stacking the $\abs{I}$ recovered slices $ k_i, i \in I$ of $T$, i.e.,
\begin{equation}
\widehat{T}(i_1,i_2,i_3) = T(i_1,i_2,s_{i_3}),
\end{equation}
where the elements of $I$ are enumerated as $s_1, \ldots , s_{\abs{I}}$. Writing $\widehat{c}_\ell$ as the restriction of $c_\ell$ to the elements with indices in $I$, $\widehat{T}$ has the decomposition
\begin{equation}
\widehat{T} = \sum_{\ell=1}^r a_\ell \otimes b_\ell \otimes \widehat{c}_\ell,
\end{equation}
and the vectors $a_k$, $b_k$ for $k=1, \ldots , r$ can be recovered by an application of Jennrich's algorithm.

\subsubsection{Step 5: recover the $c_k$}\label{sec:details5}
Lastly, we use the $\ell$th slice of $S$, for $\ell = 1, \ldots , n$, to recover the elements $c_1(\ell), \ldots , c_r(\ell)$. From each slice, we have access to
\begin{equation}\label{eq:recover_C}
M_\ell = AD_\ell^c B^T + N_\ell,
\end{equation}
for $N_\ell$ an unknown matrix in $\mathcal{U}(\ell)$. Since $A$ and $B$ are known from the preceding step, restricting \eqref{eq:recover_C} to the complement of the support of $\mathcal{U}(\ell)$ results in a linear equation in $c_1(\ell), \ldots , c_r(\ell)$. We therefore construct the linear system
\begin{equation}\label{eq:system_last}
M_\ell(i,j) = \sum_{k=1}^r A(i,k)B(j,k) c_k(\ell), \quad (i,j) \in \text{supp}(\mathcal{U}(\ell))^\complement.
\end{equation}

\begin{lemma}\label{lemma:solve3d}
For generic $A$ and $B$, the system in \eqref{eq:system_last} has a unique solution, provided that $n \geq 8b + 4$.
\end{lemma}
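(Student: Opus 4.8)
The plan is to read \eqref{eq:system_last} as a consistent, overdetermined linear system in the unknowns $x=(c_1(\ell),\dots,c_r(\ell))$ and to reduce the statement to a rank condition. Because the true values $c_1(\ell),\dots,c_r(\ell)$ solve \eqref{eq:system_last} — the matrix $N_\ell$ of \eqref{eq:recover_C} vanishes on $\text{supp}(\mathcal{U}(\ell))^\complement$ by construction — uniqueness of the solution is equivalent to the coefficient matrix $\big(A(i,k)B(j,k)\big)_{(i,j)\in\text{supp}(\mathcal{U}(\ell))^\complement,\, k\in\llbracket 1,r\rrbracket}$ having full column rank $r$. If $D_x$ denotes the diagonal matrix with entries $x_1,\dots,x_r$, this is in turn equivalent to the injectivity claim: whenever $AD_xB^T$ is supported inside $\text{supp}(\mathcal{U}(\ell))=\mathcal{D}(b)\cup(\mathcal{I}_\ell(b)\times\llbracket 1,n\rrbracket)\cup(\llbracket 1,n\rrbracket\times\mathcal{I}_\ell(b))$, one has $x=0$. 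Establishing this for generic $A,B$ is the heart of the matter.

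The key step is a reduction to a single row. I would fix an index $i_0\notin\mathcal{I}_\ell(b)$ — possible since $|\mathcal{I}_\ell(b)|\le 2b+1<n$ — and keep only the equations of \eqref{eq:system_last} lying in row $i_0$. A pair $(i_0,j)$ belongs to $\text{supp}(\mathcal{U}(\ell))^\complement$ exactly when $j\in J:=\llbracket 1,n\rrbracket\setminus(\mathcal{I}_\ell(b)\cup\mathcal{I}_{i_0}(b))$, and then $\sum_k A(i_0,k)B(j,k)x_k=0$, i.e.\ $B|_J z=0$ where $z_k:=A(i_0,k)x_k$ and $B|_J$ is the $|J|\times r$ submatrix of $B$ on the rows of $J$. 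Since $|J|\ge n-2(2b+1)$, and in fact $|J|\ge n-(3b+2)$ if $i_0$ is chosen adjacent to $\mathcal{I}_\ell(b)$, the hypothesis $n\ge 8b+4$ makes $J$ large; adding the symmetric equations coming from a second, analogously chosen row (or, more crudely, invoking $|\text{supp}(\mathcal{U}(\ell))|\le(6b+3)n<n^2-r$) shows these retained equations already overdetermine $x$. For generic $B$ every submatrix built from at least $r$ rows has full column rank, so $z=0$; for generic $A$ every entry $A(i_0,k)$ is nonzero, so $x=0$.

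To make ``generic'' precise I would argue via Zariski closedness. The entries of the coefficient matrix are polynomials in $(A,B)$, so the set of $(A,B)$ at which its column rank drops below $r$ is the common zero locus of its $r\times r$ minors — a proper algebraic subset of the constraint variety $\{A:A^TA=I_r\}\times\{B:B^TB=I_r\}$ of Assumption~\ref{ass:ON}, since it omits, for instance, the pair in which $B_0$ consists of the first $r$ columns of a generic orthogonal matrix (so every $r$-row submatrix is invertible) and $A_0$ is another such matrix with no zero entry in the designated row. The step I expect to be the real obstacle is twofold: first, pinning down the precise threshold $n\ge 8b+4$, since the naive one-row bound only yields $n\ge r+4b+2$, so one must genuinely combine several rows (or lean on the dimension inequality above) and then check that the stacked linear system actually attains full rank rather than merely having at least $r$ equations; second, verifying the non-degeneracy for an explicit $(A_0,B_0)$ while respecting the orthogonality constraint, which rules out the simplest Vandermonde-type choices and demands a slightly more delicate construction.
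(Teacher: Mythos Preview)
Your plan is essentially the paper's, and the ``real obstacle'' you flag is precisely where you stop short. The paper closes it as follows. Having fixed $i_0$ adjacent to $\mathcal{I}_\ell(b)$, it retains not only the row-$i_0$ equations $(i_0,j)\in\text{supp}(\mathcal{U}(\ell))^\complement$ but also the \emph{column}-$i_0$ equations $(j,i_0)\in\text{supp}(\mathcal{U}(\ell))^\complement$: the first block contributes rows $\big(A(i_0,k)B(s,k)\big)_k$ and the second $\big(A(t,k)B(i_0,k)\big)_k$, and since $i_0$ itself lies in neither index list the stacked matrix depends on distinct free entries of $A$ and $B$, so the generic full-column-rank claim is immediate once the row count reaches $r$. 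That count is at least $2\big(n-2(2b+1)\big)$. The second ingredient you are missing is that $r\le n$ holds automatically, because under Assumption~\ref{ass:ON} the matrix $A$ has $r$ orthonormal columns in $\mathbb{R}^n$; hence $2(n-4b-2)\ge n\ge r$ exactly when $n\ge 8b+4$, and the threshold falls out with no further work.

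Your suggested alternative of a second \emph{row} $i_0'$ could be pushed through, but it is messier: both blocks then share the factor $B$, and one must argue that the two diagonal scalings by $A(i_0,\cdot)$ and $A(i_0',\cdot)$ do not conspire to drop the rank. The crude dimension count $|\text{supp}(\mathcal{U}(\ell))|<n^2-r$ that you mention only guarantees at least $r$ equations, not rank $r$, so it does not by itself close the gap. As for your second worry, the paper does not exhibit an explicit orthonormal witness $(A_0,B_0)$ either; it simply asserts generic full rank once the row count suffices, so that concern is legitimate but is handled at the same level of rigor in the original.
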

\begin{proof}
Let $i_0 = \text{max}\left(\mathcal{I}_\ell (b) \right) + 1$, if $\ell   < n -b $, and $i_0 = \text{min}\left(\mathcal{I}_n (b) \right)- 1$ if $\ell \geq n-b $. Restrict the system \eqref{eq:system_last} to the indices $(i_0,j_1)$ for $(i_0,j_1) \in \text{supp}(\mathcal{U}(\ell))^\complement$ and $(j_2,i_0)$ where $(j_2,i_0) \in \text{supp}(\mathcal{U}(\ell))^\complement$. If we enumerate the possible values of $j_1$ as $s_1, \ldots , s_N$ and the values of $j_2$ as $t_1, \ldots , t_M$, the restricted system can be written in the form
\begin{equation}
\resizebox{0.9\linewidth}{!}{%
$
\begin{bmatrix}
A(i_0,1)B(s_1,1) &  A(i_0,2)B(s_1,2)  & \ldots & A(i_0,r)B(s_1,r) \\
\vdots & \vdots & \ddots & \vdots \\
A(i_0,1)B(s_N,1) &  A(i_0,2)B(s_N,2)  & \ldots & A(i_0,r)B(s_N,r) \\
A(t_1,1)B(i_0,1) &  A(t_1,2)B(i_0,2)  & \ldots & A(t_1,r)B(i_0,r) \\
\vdots & \\
A(t_M,1)B(i_0,1) &  A(t_M,2)B(i_0,2)  & \ldots & A(t_M,r)B(i_0,r)
\end{bmatrix}
\begin{bmatrix}
c_1(\ell) \\
c_2(\ell) \\
\vdots \\
c_r(\ell)
\end{bmatrix} = \begin{bmatrix}
M_\ell(i_0, s_1) \\
M_\ell(i_0, s_2) \\
\vdots \\
M_\ell(i_0, s_N) \\
M_\ell(t_1, i_0) \\
M_\ell(t_2, i_0) \\
\vdots \\
M_\ell(t_M,i_0)
\end{bmatrix}.
$
}
\end{equation}

Note that $i_0$ is not contained in either of the lists $s_1, \ldots , s_N$ or $t_1, \ldots , t_M$ so the matrix in the left hand side has full column rank for generic $A$ and $B$ if $M + N \geq r$. Since $M+N \geq 2(n - 2(2b+1))$ and $n \geq r$, the conclusion follows if we impose $2(n - 2(2b+1)) \geq n$. This is equivalent to $n \geq 8b + 4$, which concludes the proof.
\end{proof}
The system in \eqref{eq:system_last} therefore determines $c_1(\ell), \ldots , c_r(\ell)$ uniquely. Repeating this procedure for $\ell = 1, \ldots , n$ therefore recovers the vectors $c_1, \ldots , c_r$.

\section{Numerical results}\label{sec:numerical}
This section details different numerical results of the algorithm of the article. The tensors used in the simulations are all of the form $\sum_{k=1}^{r} a_k \otimes b_k \otimes c_k$, with $c_k$ having independently generated normal entries, and the $a_k,b_k$ randomly generated orthogonal vectors. All computations were carried out on a MacBook Pro with a 3.1 GHz Intel Core i5 processor and 16 GB of memory.

As stopping criterion for the iterative solutions of equations~\eqref{eq:update} and \eqref{eq:solve_N2nd}, we terminate the iterations once the relative improvement from one iteration to the next is below a threshold denoted by $\varepsilon_{\text{tol}}$. 

\begin{figure}[!h]
\centering
\includegraphics[width=\textwidth]{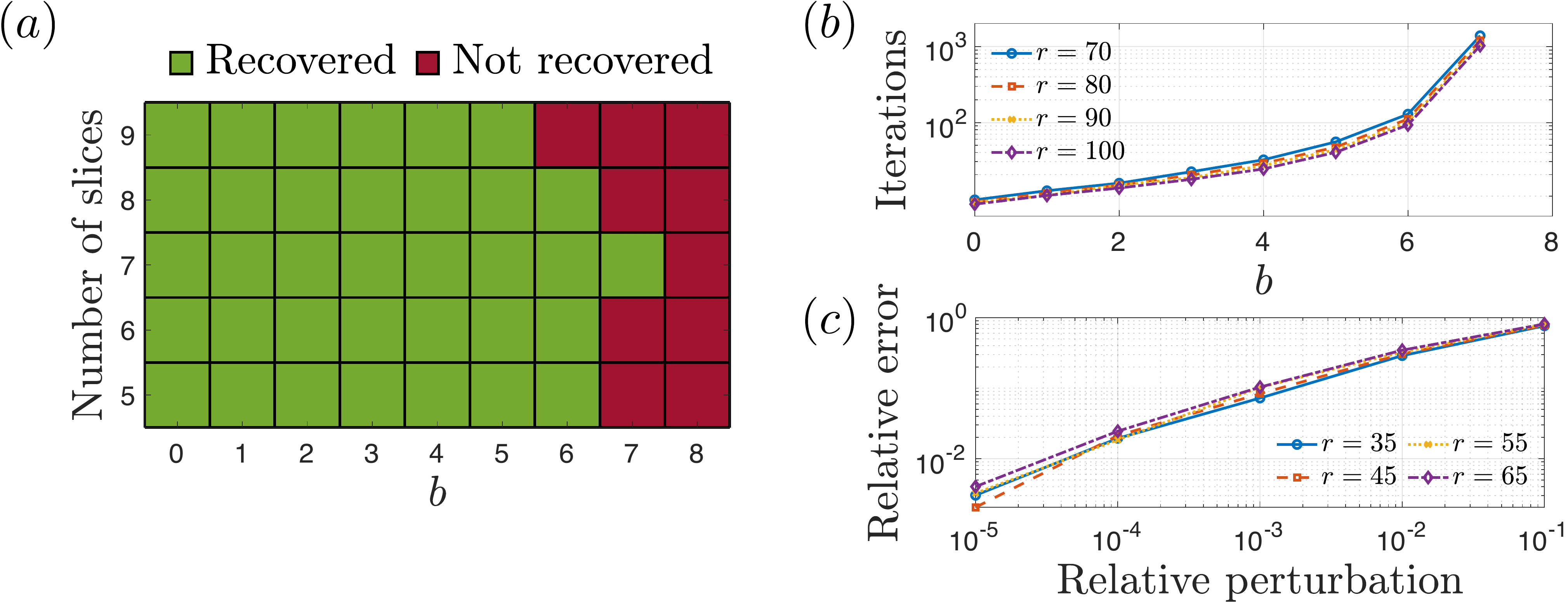}
\caption{Numerical results for the experiments in sections~\ref{sec:numerical1}, \ref{sec:numerical2} and \ref{sec:numerical3}. respectively. (a) Successfully recovered tensors as function of $b$ and the number of slices used. Green color indicates recovery and red non-recovery. (b) Number of iterations required for convergence of \eqref{eq:update} as a function of $b$ and $r$. (c) Effect of entrywise noise on recovery error, as function of the relative perturbation magnitude $\frac{\|E_{\text{noise}}\|}{\|T\|}$ and $r$.}
\label{fig:numerical}
\end{figure}

\subsection{Effect of number of slices on highest possible $b$}\label{sec:numerical1}
For a fixed value $n=100  = r$, we show the effect of recovery by the value of $b$ and the number of slices used. We used $\varepsilon_{\text{tol}} = 10^{-6}$. The results are shown in figure~\ref{fig:numerical}(a). Adding additional slices increases the maximum possible recoverable $b$ only up to a certain point, since our algorithm requires the slices to have disjoint index sets $\mathcal{I}_{k_i}(b)$ for all slices $i = 1, \ldots , m$ which imposes the bound on $b$ in terms of $m$ and $n$ in \eqref{eq:cond_disjoint}. The use of $m=7$ slices is clearly optimal in this case, and the maximum value of recoverable $b$ coincides with that of the lower bound on $n$ in \eqref{eq:main_bounds}.

\subsection{Number of iterations required for convergence as function $b$}\label{sec:numerical2}
For a fixed value $n=100 $, we show the number of iterations needed for convergence of the alternating least squares procedure in \eqref{eq:update}. For each value of $b$, we run $10$ randomly generated trials, and report the average number of iterations required for convergence. We used $\varepsilon_{\text{tol}} = 10^{-7}$. The results are shown in figure~\ref{fig:numerical}(b). Note that the number of required iterations remains relatively low for $b$ below the maximum admissible $b$ for this value of $n$. However, the number of required iterations increases for $b$ approaching its maximally recoverable value, and for decreasing $r$.

\subsection{Effect of entrywise noise on recovery error}\label{sec:numerical3}
For a fixed value $n=65$, we study the effect of an additional entrywise noise term on the recovery error. In detail, we generate random tensors $T$ of the form $\sum_{k=1}^{r} a_k \otimes b_k \otimes c_k$ with $a_k,b_k$ orthogonal vectors for $k=1,\ldots , r$, and apply the algorithm of the article to the tensor $T+E + E_{\text{noise}}$, where $E$ is the structured noise tensor discussed in the algorithm with $b=5$ and $E_{\text{noise}}$ is a tensor with independent normal entries. The algorithm of the article is then run on $T+E + E_{\text{noise}}$ with resulting tensor $T_0$. For each $b$, we run $100$ randomly generated trials with $\varepsilon_{\text{tol}} = 10^{-12}$. We show the resulting average recovery error $\frac{\|T-{T_0}\|}{\|T\|}$ as a function of $\frac{\|E_{\text{noise}}\|}{\|T\|}$ and $ r$ in figure~\ref{fig:numerical}(c).

\section{Conclusion}
We presented an algorithm to recover orthogonally decomposable tensors corrupted by arbitrarily strong, but structured noise. The problem can be seen as a deterministic tensor completion problem, and we have shown how this can be solved provided the tensor dimension and rank are larger than an affine function of the corruption bandwidth. Notably, this enables recovery under a high-rank assumption on the tensor, as opposed to low-rank assumptions commonly required for completion problems. The techniques in the article are not limited to the specific pattern of unknown elements treated. Future work therefore includes studying the possibility of bridging the gap between high- and low-rank completion techniques to develop algorithms with weaker conditions on the tensor rank. Another avenue for future work lies in relaxing the orthogonality condition in assumption~\ref{ass:ON} to soft-orthogonality constraints encoded instead by incoherent tensor components.

\appendix

\section{Proof of lemma~\ref{lemma:count}}\label{sec:proof_lemma_count}
We start with the first part of lemma~\ref{lemma:count}. For each slice $X_i$, only the variables not contained in the rows with indices in $\mathcal{I}_{k_i}(2b)$ enter into any of the equations in \eqref{eq:sylvester}. We therefore need to show that the linear operator in \eqref{eq:sylvester} has trivial kernel under the assumptions in \eqref{eq:cond_lemma}, for generic matrices $A,B,C$. This amounts to ensuring that the minors of the operator in \eqref{eq:sylvester} are non-zero. Since these minors are polynomial expressions in the elements of $A,B,C$, this will be true generically, provided that we can show that these polynomials are not identically vanishing. This will hold if we can exhibit one example of the operator in \eqref{eq:sylvester} with full column rank, under the bounds in \eqref{eq:cond_lemma}. We first prove an auxiliary result.

\begin{lemma}\label{lemma:lemma_proof}
Let $A_1, A_2, A_3$ be generic matrices in $\mathbb{R}^{m\times n}$. Let $I_1,I_2,I_3$ be disjoint subsets of $\llbracket 1, n\rrbracket$ and let $Y_1, Y_2, Y_3$ be matrices in $\mathbb{R}^{n\times p}$. If
\begin{enumerate}
\item $Y_i$ has non-zero elements only in the rows with indices in the set $I_i$, for $i = 1,2,3$
\item each column of $Y_i$ has at most $m$ non-zero elements, for $i = 1,2,3$
\end{enumerate}
then the system
\begin{align}\label{eq:lemma1_for_count1}
A_2Y_1 &= -(A_1Y_2)^T \\
A_3Y_1 &= -(A_1Y_3)^T \label{eq:lemma1_for_count2} \\
A_3Y_2 &= -(A_2Y_3)^T \label{eq:lemma1_for_count3}
\end{align}
has the unique solution $Y_1 = Y_2 = Y_3 = 0$.
\end{lemma}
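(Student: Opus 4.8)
The plan is to reduce the claim to the non-degeneracy of a single linear system and then verify that system at one convenient point of the parameter space. Conditions (1) and (2) confine $(Y_1,Y_2,Y_3)$ to a finite union of coordinate subspaces of $(\mathbb{R}^{n\times p})^3$: for each $i$ and each column index $c$, fix a set $S_{i,c}\subseteq I_i$ with $\abs{S_{i,c}}\le m$ and require the $c$-th column of $Y_i$ to be supported in $S_{i,c}$. On each such subspace $V$, equations \eqref{eq:lemma1_for_count1}--\eqref{eq:lemma1_for_count3} define a linear map whose matrix has polynomial entries in $A_1,A_2,A_3$, so the set of triples $(A_1,A_2,A_3)$ for which this map has a nontrivial kernel is the common zero locus of its maximal minors, hence Zariski closed. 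Since a finite intersection of nonempty Zariski-open subsets of affine space is again nonempty, it suffices to exhibit, for each of the finitely many subspaces $V$, one triple $(A_1,A_2,A_3)$ for which \eqref{eq:lemma1_for_count1}--\eqref{eq:lemma1_for_count3} forces $(Y_1,Y_2,Y_3)=0$ in $V$.

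To find such a triple I would eliminate two of the three blocks. Because $I_1,I_2,I_3$ are disjoint, $A_iY_j$ involves only the columns of $A_i$ indexed by $I_j$, and for generic $A_i$ any $\le m$ of those columns are linearly independent — this is exactly where hypothesis (2) is used, since it makes the relevant submatrices of $A_i$ injective on the columns of $Y_j$. Working one support pattern at a time, \eqref{eq:lemma1_for_count1} then solves for $Y_1$ in terms of $Y_2$ (with a consistency constraint on $Y_2$ whenever some $\abs{S_{1,c}}<m$); substituting into \eqref{eq:lemma1_for_count2} yields a relation $G\,(A_1Y_2)^{T}=(A_1Y_3)^{T}$ with $G$ assembled from the columns of $A_2$ and $A_3$ indexed by $I_1$, and \eqref{eq:lemma1_for_count3} then eliminates $Y_3$. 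What remains is a single Sylvester-type equation
\begin{equation*}
K\,Z\,G^{T}+H\,Z^{T}L^{T}=0,
\end{equation*}
possibly together with further linear constraints on $Z$ (which can only help), where $Z$ is the nonzero block of $Y_2$ and $K,G,H,L$ are built from the pairwise disjoint column blocks of $A_1,A_2,A_3$ indexed by $I_2$, $I_1$, $I_3$, $I_2$ respectively; hence $K,G,H,L$ are independent generic matrices. By the same Zariski argument it is enough to specialize them to one value: taking $K,G,H$ equal to the identity and $L$ equal to twice the identity reduces the equation to $Z+2Z^{T}=0$, whose transpose is $Z^{T}+2Z=0$, and the two together force $3Z=0$, i.e. $Z=0$. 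Back-substituting through the elimination gives $Y_3=0$ and then $Y_1=0$, which is the claim.

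The main obstacle is the elimination step. The transposes in \eqref{eq:lemma1_for_count1}--\eqref{eq:lemma1_for_count3} couple columns of one block to rows of the image of another, so the system does not separate column-by-column and the reduction must be carried out support-pattern by support-pattern, carefully tracking the consistency conditions that appear whenever a column of some $Y_i$ occupies strictly fewer than $m$ of its admissible rows; it is precisely the bound of $m$ nonzero entries per column in (2) that keeps the relevant submatrices of the $A_i$ injective and stops those conditions from over-determining the reduced system. Verifying that $K,G,H,L$ are genuinely independent generic matrices — which is what licenses the final specialization — is the other point requiring care, and rests again on the disjointness of $I_1,I_2,I_3$.
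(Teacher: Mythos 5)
Your overall strategy---reduce to a genericity/Zariski argument and then exhibit one explicit triple $(A_1,A_2,A_3)$ for which the system has only the zero solution---is exactly what the paper does, and your setup of that reduction (finite union of coordinate subspaces, open rank condition on each) is correct. The gap is in the witness you then try to build. In your elimination, $K,G,H,L$ are built from column blocks $A_i(:,I_j)$, which are $m\times\abs{I_j}$ matrices; for the specialization "$K,G,H$ equal to the identity, $L$ twice the identity" to make sense they must all be square, and for $Z+2Z^{T}=0$ to even type-check, $Z$ itself must be square. Neither is guaranteed by the hypotheses: nothing forces $\abs{I_2}=m$, and when a column of $Y_2$ occupies fewer than $m$ of the admissible rows the "nonzero block" is not a rectangular block at all but a column-by-column union of supports, so the single Sylvester relation $KZG^{T}+HZ^{T}L^{T}=0$ does not cleanly materialize. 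The consistency conditions created when eliminating $Y_1$ and $Y_3$ are also asserted to "only help" without examination, and the pseudoinverses that elimination implicitly uses exist only after the very genericity you are trying to establish. The route is plausible in spirit, but the decisive step is not well-defined in general.

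The paper's witness is much simpler and avoids all of this. Set the columns of $A_1$ indexed by $I_2$ to zero, leaving the remaining columns of $A_1$ and all of $A_2,A_3$ generic. Then $A_1Y_2=0$ identically, so \eqref{eq:lemma1_for_count1} gives $A_2Y_1=0$; since each column of $Y_1$ is supported on at most $m$ rows inside $I_1$ and the corresponding $m\times\abs{S}$ submatrices of $A_2$ have full column rank generically, this forces $Y_1=0$ column by column. Then \eqref{eq:lemma1_for_count2} gives $A_1Y_3=0$, and because $I_3$ is disjoint from $I_2$ the $I_3$-indexed columns of $A_1$ are still generic, so $Y_3=0$; finally \eqref{eq:lemma1_for_count3} gives $A_3Y_2=0$ and hence $Y_2=0$. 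No elimination, no pseudoinverses, no squareness assumptions---zeroing a single column block of $A_1$ makes the three equations cascade. You should replace your elimination with this decoupling.
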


\begin{proof}
Just as in the opening paragraph of this section, we need only find one example of matrices $A_1,A_2,A_3$ for which the conclusion holds, to ensure that it holds generically. To do this, let the columns of $A_1$ with indices in $I_2$ be zero. It follows that $A_1Y_2 = 0$, so, from \eqref{eq:lemma1_for_count1}, $A_2Y_1 = 0$. In the first column of this equation, discarding the zero elements of the first column of $Y_1$ gives a matrix equation for the non-zero elements in the first column of $Y_1$. From the second assumption in the lemma, the corresponding matrix has full column rank generically. It follows that the first column of $Y_1$ is zero, and similarly the remaining columns are as well. Inserting this into \eqref{eq:lemma1_for_count2} similarly gives $Y_3 = 0$, since $ I_2$ and $I_3$ are disjoint. Inserted into \eqref{eq:lemma1_for_count3}, this gives also $Y_2 = 0$, which concludes the proof.
\end{proof}

We now proceed to construct one example of the operator in \eqref{eq:sylvester} with full column rank. For any choice of three slices, we will show that the resulting variables $X_i$ are zero. which will suffice to prove the lemma. We first consider slices $i=1,3,5$. Let therefore $X_{1},X_{3},X_{5}$ be in the kernel of \eqref{eq:sylvester}. These matrices then satisfy

\begin{equation}\label{eq:sylvester_kernel}
X_{i}M_{j}^T - M_{j}X_{i}^T + M_{i}X_{j}^T - X_{j}M_{i}^T\Big|_{\mathcal{I}_{k_ik_j}(b)^\complement} = 0 , \quad i,j \in \{1,3,5\}.
\end{equation}

We will construct $M_1, M_3, M_5$ with support contained in disjoint rows. In detail, let $J_1, J_3, J_5$ be disjoint subsets of $\llbracket 1, n\rrbracket$ such that
\begin{align}
& J_i \cap \mathcal{I}_\ell(2b) = \emptyset, \text{ for } i,\ell \in \{1,3,5\}  ,  \label{eq:assumptions_for_proof0} \\
&\abs{J_i} = 4b+2 , \text{ for } i \in \{1,3,5\} ,  \label{eq:assumptions_for_proof1} \\
& \text{min}(J_3) \geq \text{max}(J_1) + 4b+2 , \label{eq:assumptions_for_proof2} \\
& \text{min}(J_5) \geq \text{max}(J_3) + 4b+2 .  \label{eq:assumptions_for_proof3}
\end{align}

Let $M_i$ have support in the rows contained in $J_i$. Note that this is possible when $r \geq \abs{J_1} + \abs{J_2} + \abs{J_3} = 3(4b+2)$ and $n \geq 8(4b+2)$, by letting the first $\abs{J_1}$ columns of $A$ have support contained in the rows with indices in $J_1$, the subsequent $\abs{J_3}$ columns of $A$ have support contained in the rows with indices in $J_3$ and the last $\abs{J_5}$ columns of $A$ have support in rows in $J_5$. If $D_{k_1}^c$ has support in the first $\abs{J_1}$ diagonal elements, $D_{k_3}^c$ in the subsequent $\abs{J_3}$ diagonal entries and $D_{k_5}^c$ in the subsequent $\abs{J_5}$ ones, the $M_i$ have the desired support.

We now show that this choice of $M_i$ enforces $X_1 = X_3 = X_5 = 0$. The matrix
\begin{equation}\label{eq:cols_support}
M_{i}X_{j}^T - M_{j}X_{i}^T,
\end{equation}
has support contained in the rows with indices in $J_i \cup J_j$. From \eqref{eq:sylvester_kernel}, it follows that the entries of $M_{i}X_{j}^T - M_{j}X_{i}^T$ not contained in $J_i \times J_i$, $J_i \times J_j$, $J_j \times J_i$, $J_j \times J_j$ or  $\mathcal{I}_{k_ik_j}(b)$ are zero. Therefore, elements with rows in $J_j$ and columns not contained in
\begin{equation}\label{eq:cols_not_zero}
\llbracket \text{min}(J_j)-2b, \text{max}(J_j)+2b\rrbracket \cup J_i \cup \mathcal{I}_{k_i}(2b) \cup \mathcal{I}_{k_j}(2b),
\end{equation}
are zero. We now claim that also the columns of $X_i^T$ not contained in \eqref{eq:cols_not_zero} are zero. To see this, observe that the non-zero elements $x$ of any such column appears in \eqref{eq:cols_support} as an equation of the form $Mx = 0$, where $M$ is a generic matrix with $\abs{J_j}$ rows. Since each column of $X_{i}^T$ has at most $4b+2$ non-zero elements, \eqref{eq:assumptions_for_proof1} enforces $x=0$.

Next, intersect the sets in \eqref{eq:cols_not_zero} for all $j \neq i$. Equations~\eqref{eq:assumptions_for_proof2}--\eqref{eq:assumptions_for_proof3} show that the columns of $X_i^T$ not contained in $J_i \cup \mathcal{I}_{k_i}(2b)$ are zero, for each $i=1,3,5$. We lastly show that the remaining columns of $X_i^T$ with indices in $J_i$ are zero as well. Equation~\eqref{eq:sylvester_kernel} now reads as
\begin{align}
M_3(J_3,:) X_1^T(:,J_1) &= -\Big(M_1(J_1,:)X_3^T(:,J_3)\Big)^T, \\
M_5(J_5,:)X_1^T(:,J_1) &= -\Big(M_1(J_1,:)X_5^T(:,J_5)\Big)^T,  \\
M_5(J_5,:)X_3^T(:,J_3) &= -\Big(M_3(J_3,:)X_5^T(:,J_5)\Big)^T.
\end{align}
By \eqref{eq:assumptions_for_proof0}--\eqref{eq:assumptions_for_proof3}, this system of equations satisfies the conditions in lemma~\ref{lemma:lemma_proof}. It follows that $X_1 = X_3 = X_5 = 0$ for generic matrices $A,B,C$ satisfying assumption~\ref{ass:ON}.

Next, for any remaining slice $X_{i}$, for $i \neq 1,3,5$, repeat the above argument with slices $1,3,i$ to conclude that also $X_i = 0$. This concludes the proof of the first part of lemma~\ref{lemma:count}.

We next prove the second part of lemma~\ref{lemma:count} and start with the bound on $n$ in \eqref{eq:cond_lemma_converse}. This comes from counting the number of equations present in \eqref{eq:sylvester} and enforcing that this equals at least the number of unknowns. Write
\begin{align}
n_i &= \text{dim}\left( \mathcal{U}(k_i)  \Big|_{\mathcal{I}_{k_i}(2b)^\complement\times \llbracket 1 , n\rrbracket} \right).
\end{align}

We first count the number of unknowns, i.e., $\sum_{i=1}^m n_i$. One can verify that
\begin{align}
n_1 &= (n-2(b+1))(3b+2) - \frac{b(b+1)}{2}, \\
n_2 &= (2b+1)(2n- 8b - 3) + b+1 - \frac{b(b+1)}{2}, \\
n_\ell &= (2b+1)(2n - 8b  - 2) - b(b+1), \quad \ell \not \in \{1,2,m\}.
\end{align}
For $n_m$, we write $\Delta = n - m\cdot (2b+1)$ and distinguish the two cases $\Delta \geq 0$ and $\Delta < 0$. We have
\begin{align}
n_m &= \begin{cases} (2b+1)(2n - 8b  - 2) - b(b+1), \quad \text{ if } \Delta \geq 0,\\
(n-\Delta_r)(2b+1) + \Delta_c(n-\Delta_r) - \frac{b(b+1)}{2}, \quad \text{ if } \Delta < 0,
\end{cases}
\end{align}
where
\begin{align}
\Delta_r &= n - (m-2)(2b+1)-2 ,\\
\Delta_c &= \begin{cases}
2b+1, \quad \text{ if } (m-1)(2b+1)+b+1 \leq n, \\
n - ((m-2)(2b+1)+b+1), \text{ if } (m-1)(2b+1)+b+1 \geq n.
\end{cases}
\end{align}

Next, we count the number of equations present in \eqref{eq:sylvester}. Figure~\ref{fig:num_eqns} illustrates the sparsity patterns of the equations retained in \eqref{eq:sylvester}.

\begin{figure}[!h]
\centering
\includegraphics[width=\textwidth]{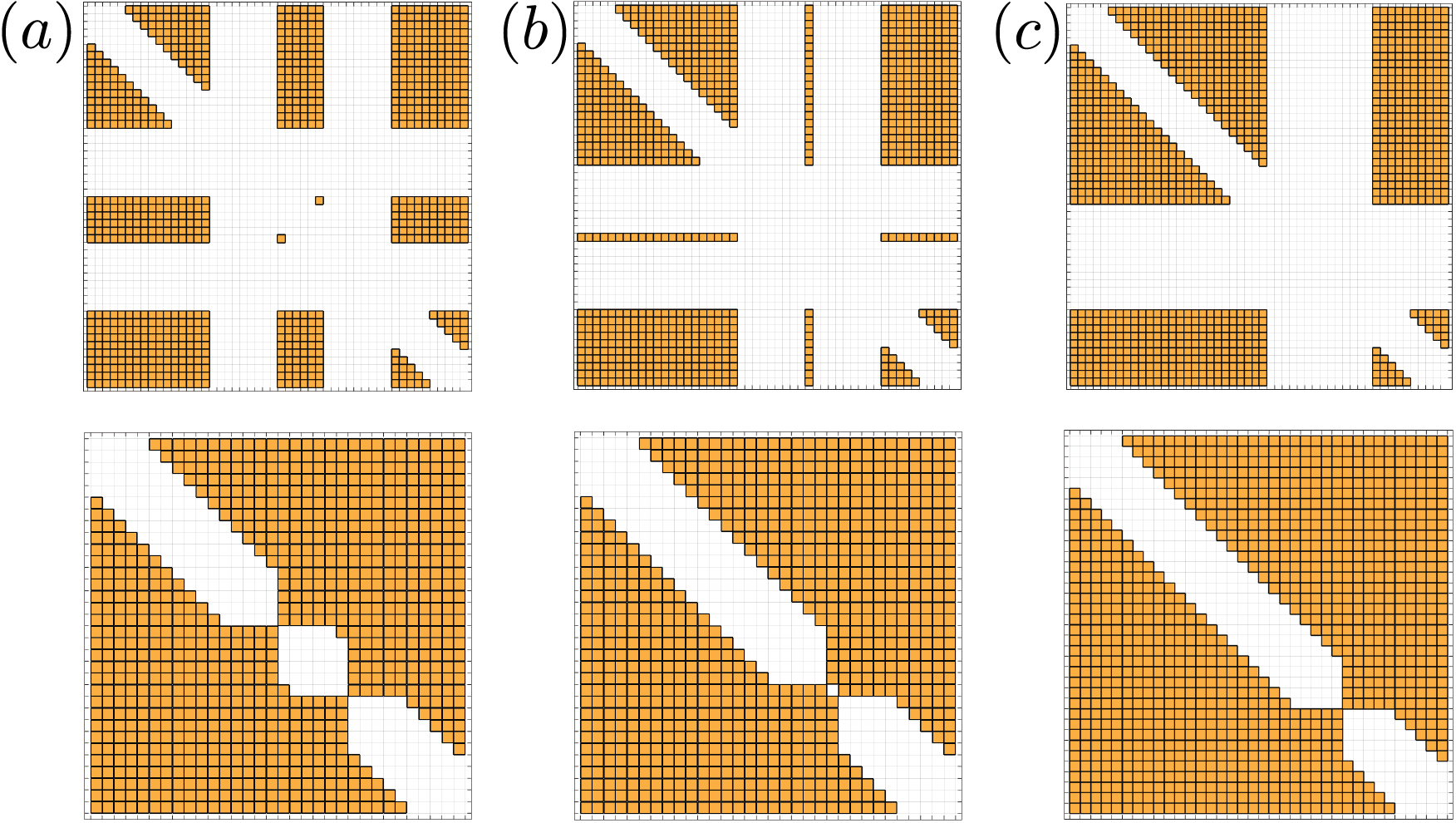}
\caption{Illustration of the equations retained in \eqref{eq:sylvester}. Orange color indicates that the equation in the corresponding element is retained, and white color that it is not retained. The top row shows the entire matrix and the bottom row removes the white space induced by the deleted rows and columns. $(a),(b)$ and $(c)$ illustrate $n_{58}$, $n_{68}$, and $n_{78}$, respectively.}
\label{fig:num_eqns}
\end{figure}

One can verify that the number of equations present in \eqref{eq:sylvester} for a specific choice of $i,j$ is
\begin{equation}
\frac{(\widehat{n} - 2b)(\widehat{n}-2b+1)}{2} + n_{ij},
\end{equation}
where $\widehat{n} = n - \abs{\mathcal{I}_{k_i}(2b) \cup \mathcal{I}_{k_j}(2b)}$ and the $n_{ij}$ are real numbers with $n_{ij} = n_{ji}$. With $i < j$, the $n_{ij}$ are determined by

\textbf{Case 1:} $\Delta > 2b$. If $i,j \not \in \{1,2\}$, then
\begin{equation}
n_{ij} =  \begin{cases}
b(2b+1) , \quad \abs{i-j} = 1, \\
b(2b+1) +2b, \quad \abs{i-j} = 2, \\
2b(2b+1), \quad \abs{i-j} \geq 3.
\end{cases}
\end{equation}

If $i = 1$, then
\begin{equation}
n_{1j} =  \begin{cases}
0 , \quad j = 2, \\
2b, \quad j = 3, \\
b(2b+1), \quad j \geq 4.
\end{cases}
\end{equation}

If $i = 2$, then
\begin{equation}
n_{2j} =  \begin{cases}
2b , \quad j = 3, \\
4b, \quad j = 4, \\
2b+b(2b+1), \quad j \geq 5.
\end{cases}
\end{equation}

\textbf{Case 2:} $\Delta \leq 2b$. Same as case 1, except subtract $\frac{(2b-\max(0,\Delta))(2b-\max(0,\Delta) + 1)}{2}$ from $n_{im}$.

\textbf{Case 3:} $\delta = n - (m-1)\cdot (2b+1) \leq 2b$. Same as case 2, except subtract $\frac{(2b-\delta)(2b-\delta + 1)}{2}$ from $n_{i,m-1}$.

Ensuring that the number of unknowns is at most the number of available equations with these expressions results in a lower bound for admissible $n$ in terms of $b$. This bound is shown in figure~\ref{fig:lower} for a few different values of $m$. The figure also shows the line $L(m)$ determined by $n = (2b+1)(m-1)+1$, which is the additional lower bound from \eqref{eq:cond_disjoint}, since the $m$ chosen slices $k_1, \ldots , k_m$ were required to be disjoint.

\begin{figure}[!h]
\centering
\includegraphics[width=\textwidth]{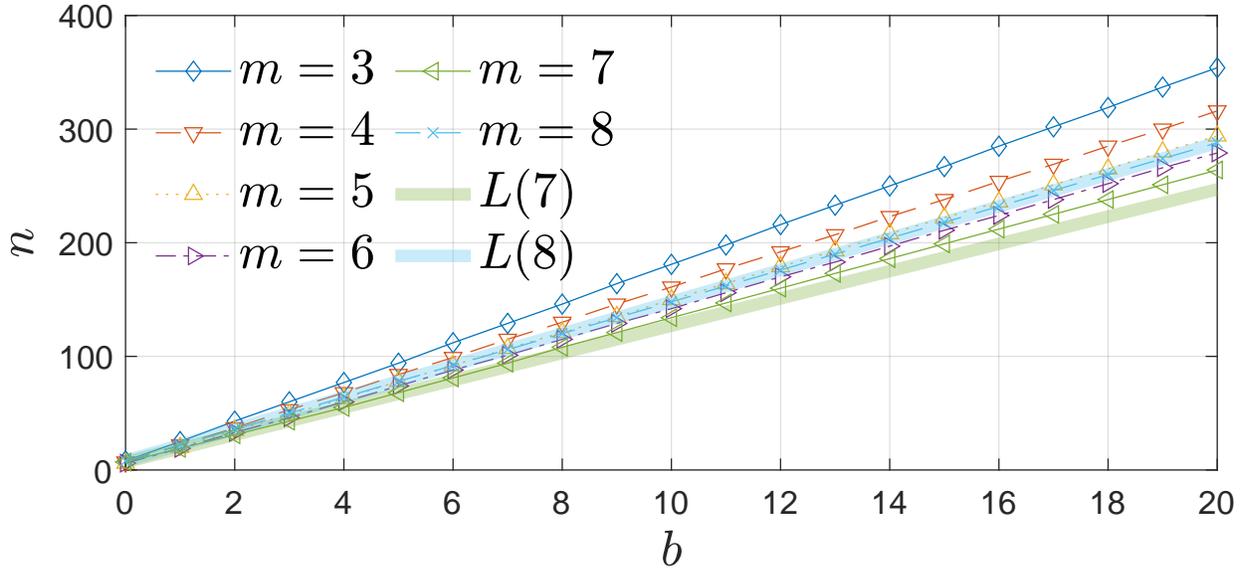}
\caption{Lower bounds of admissible $n$ as function of $b$.}
\label{fig:lower}
\end{figure}

Since the lower bound from using seven slices is contained above $L(7)$, but beneath $L(8)$, it follows that the optimal choice of $m$ is $m=7$. For this choice, explicitly writing out the bound results in
\begin{equation}
n \geq \text{max}\left(12b+7, \frac{93}{10}b + \frac{\sqrt{1419b^2 - 2704b + 921}}{10} - \frac{69}{10} \right),
\end{equation}
which is precisely the bound on $n$ in \eqref{eq:cond_lemma_converse}.

We lastly treat the bound on $r$ in \eqref{eq:cond_lemma_converse}. If $r < 4b + 2$, study a column of $X_2$ with $4b+2$ non-zero elements. For each row of $B^T$, pick out the columns with same sparsity pattern as the chosen column of $X_2$. The resulting matrix necessarily has linearly dependent columns. Take therefore $v$ in $\mathbb{R}^{4b+2}$ as a non-zero vector in the kernel of this matrix and define $X_2$ by inserting $v$ into the non-zero elements of the chosen column. Let all other entries of $X_2$ and $X_i$ for $i\neq 2$ be zero. Clearly, $X_{i} \in \mathcal{U}( k_i)$, for $i \in \llbracket 1, m\rrbracket$ and the $X_{i}$ are non-trivial elements of the kernel of \eqref{eq:sylvester}. This concludes the proof of lemma~\ref{lemma:count}.

\section{Proof of lemma~\ref{lemma:unique2nd}}
It is possible to prove lemma~\ref{lemma:unique2nd} in a similar fashion to the proof of lemma~\ref{lemma:count}. However, we present a different type of argument that allows for tighter bounds on $n$ and $r$ in terms of $b$. For ease of reference, we first prove the following two auxiliary results.
\begin{lemma}\label{lemma:sym_switch}
Let $M$ in $\mathbb{R}^{p\times q}$ have linearly independent columns and let $X$ be a matrix in $\mathbb{R}^{q\times p}$. The statements
\begin{enumerate}
\item $MX$ is a symmetric $p\times p$-matrix
\item $X^T = MS$, for $S$ a symmetric $q\times q$-matrix
\end{enumerate}
are then equivalent.
\end{lemma}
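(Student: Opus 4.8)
The statement is an elementary linear-algebra equivalence, so the plan is simply to prove the two implications directly. For the direction $(2) \Rightarrow (1)$: if $X^T = MS$ with $S = S^T$, then $X = S^T M^T = S M^T$, hence $MX = M S M^T$, which is visibly symmetric since $(MSM^T)^T = M S^T M^T = M S M^T$. This direction requires no hypothesis on $M$ at all. For the direction $(1) \Rightarrow (2)$: suppose $MX$ is symmetric. Since $M \in \mathbb{R}^{p\times q}$ has linearly independent columns, $M^T M \in \mathbb{R}^{q\times q}$ is invertible, and the Moore--Penrose pseudoinverse $M^\dagger = (M^T M)^{-1} M^T$ satisfies $M^\dagger M = I_q$. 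Define $S := M^\dagger X^T$; equivalently one can work with $S := (M^T M)^{-1} M^T X^T$. The task is then to show that this $S$ is symmetric and that $MS = X^T$.

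The key computation is the following. From symmetry of $MX$ we have $MX = X^T M^T$. Left-multiply by $M^\dagger$ and right-multiply by $(M^\dagger)^T = M(M^TM)^{-1}$: this gives $M^\dagger M X (M^\dagger)^T = M^\dagger X^T M^T (M^\dagger)^T$. The left side simplifies, using $M^\dagger M = I_q$, to $X (M^\dagger)^T = X M (M^TM)^{-1}$; the right side is $(M^\dagger X^T)(M^\dagger M)^T = M^\dagger X^T = S$ (again using $M^\dagger M = I_q$, so $(M^\dagger M)^T = I_q$). Thus $S = X M (M^TM)^{-1}$, while also by definition $S = (M^TM)^{-1} M^T X^T = \big( X M (M^TM)^{-1} \big)^T$, since $(M^TM)^{-1}$ is symmetric. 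Hence $S = S^T$. Finally, to verify $MS = X^T$: because $M$ has full column rank, left-multiplication by $M$ is injective, so it suffices to check $M^\dagger (MS) = M^\dagger X^T$; the left side is $S$ and the right side is $S$ by definition, so $MS = X^T$. This completes $(1)\Rightarrow(2)$.

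There is no real obstacle here — the only place the hypothesis "$M$ has linearly independent columns" is used is to guarantee invertibility of $M^T M$ (equivalently, the existence of a left inverse), and this is exactly what is needed for the reconstruction $S = M^\dagger X^T$ to be well defined and for the injectivity argument $MS = X^T$. One small point to be careful about in the writeup is to keep the sizes straight ($M$ is $p \times q$, $X$ is $q \times p$, so $MX$ is $p\times p$ and $S$ must be $q \times q$), and to note that $(1) \Rightarrow (2)$ is where full column rank is essential: without it, $X^T$ need not lie in the column space of $M$, and the implication can fail.
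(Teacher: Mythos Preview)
Your argument for the symmetry of $S$ is correct, but the final step ``$MS = X^T$'' contains a genuine gap. You write: ``because $M$ has full column rank, left-multiplication by $M$ is injective, so it suffices to check $M^\dagger(MS) = M^\dagger X^T$.'' This inference is backwards. Injectivity of $M$ lets you cancel $M$ on the \emph{left}, i.e., deduce $a=b$ from $Ma=Mb$; it says nothing about cancelling $M^\dagger$ on the left. When $p>q$ the pseudoinverse $M^\dagger$ has nontrivial kernel, so $M^\dagger(MS)=M^\dagger X^T$ (which is trivially true, both sides being $S$) does not imply $MS=X^T$. What is missing is the fact that $X^T$ lies in the column space of $M$, i.e., $MM^\dagger X^T = X^T$; without this, the implication fails.

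The fix is already implicit in your own computation and requires no new idea: left-multiplying $MX = X^T M^T$ by $M^\dagger$ alone (not also right-multiplying) gives $X = M^\dagger X^T M^T = S M^T$, whence $X^T = M S^T = MS$ using the symmetry of $S$ you just proved. Alternatively, one can argue directly that $\operatorname{col}(X^T)\subseteq\operatorname{col}(M)$: if $M^T v=0$ then $MXv = X^T M^T v = 0$, and injectivity of $M$ forces $Xv=0$, i.e., $v\perp\operatorname{col}(X^T)$. The paper's proof takes yet another route, completing the columns of $M$ to a basis and using the dual basis to show $MX = MSM^T$, then applying $M^\dagger$; your pseudoinverse approach is equally valid once this one step is repaired.
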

\begin{proof}
It is clear that the second statement implies the first. For the reverse direction, complete the column vectors of $M$ to a basis $m_1, \ldots , m_p$, and let $n_1, \ldots , n_p$ be a dual basis. Since $MX$ is symmetric, it can be written in the form
\begin{equation}
MX = \sum_{i,j=1}^{p} \alpha_{ij} m_i \otimes m_j,
\end{equation}
where $\alpha_{ij} = \alpha_{ji}$. For a fixed $\ell = q+1, \ldots , p$, acting on this equation with $n_{\ell}$ from the left yields
\begin{equation}
0 = n_{\ell}^T MX = \sum_{j=1}^{p} \alpha_{\ell j} m_j.
\end{equation}
By linear independence of the $m_j$, it follows that $\alpha_{\ell j} = 0 = \alpha_{j \ell}$, so we can write $MX = MSM^T$, where $S$ is the symmetric $q\times q$-matrix defined by $S_{ij} = \alpha_{ij}$. Acting on this equation by the pseudoinverse $M^{\dagger}$ from the left results in $X = SM^T$, so $X^T = MS$, which concludes the proof.
\end{proof}

\begin{lemma}\label{lemma:full_rank}
Let $I, J, K \subseteq \llbracket 1, n \rrbracket$ be index sets and $A,B,C \in \mathbb{R}^{n\times r}$ generic matrices satisfying assumption~\ref{ass:ON}. Let $i,j,k$ be distinct integers in $\llbracket 1, n\rrbracket$. Write
\begin{equation}
M_i = BD_i^c A^T, \quad M_{j} = BD_{j}^c A^T, \quad M_{k} = BD_{k}^c A^T.
\end{equation}
Then
\begin{enumerate}
\item The matrix
\begin{equation}  \label{eq:full_rank1}
\begin{bmatrix} M_{j}(I,J), & -M_{i}(I,K) \end{bmatrix}
\end{equation}
has full column rank if $J$ and $K$ are disjoint, $\abs{I} \geq \abs{J} + \abs{K}$, and $r \geq \abs{J} + \abs{K}$.
\item The matrix
\begin{equation} \label{eq:full_rank2}
\begin{bmatrix}
M_i(:,J), & -M_j(:,I) , &M_i(:,K) , & -M_k(:,I)
\end{bmatrix}
\end{equation}
has full column rank if $I,J$ and $K$ are disjoint, and $r \geq 2\abs{I} + \abs{J} + \abs{K}$.
\end{enumerate}
\end{lemma}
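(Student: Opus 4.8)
The plan is to prove both parts by the genericity reduction used repeatedly in the paper: every entry of the two block matrices is a polynomial in the entries of $A,B,C$, so the vanishing of a maximal minor is a polynomial condition, and to see that full column rank holds generically (among triples with $A,B$ obeying Assumption~\ref{ass:ON}) it is enough to exhibit one triple at which it holds. The first step is to factor out $B$: writing $M_\bullet(I,J)=B(I,:)D^c_\bullet A(J,:)^T$, the matrix in \eqref{eq:full_rank1} equals $B(I,:)\cdot\big[\,D^c_jA(J,:)^T \mid -D^c_iA(K,:)^T\,\big]$ and the matrix in \eqref{eq:full_rank2} equals $B\cdot\big[\,D^c_iA(J,:)^T \mid -D^c_jA(I,:)^T \mid D^c_iA(K,:)^T \mid -D^c_kA(I,:)^T\,\big]$. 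In each case I would reduce to checking that the bracketed $r\times(\,\cdot\,)$ matrix has full column rank and that its column space meets the kernel of the left factor trivially.

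For Part~1, take $C$ generic so each $D^c_\ell$ is invertible, and choose $A$ so that the rows $A(J_\ell,:)$ and $A(K_m,:)$ are distinct standard basis vectors of $\mathbb{R}^r$ supported on $|J|+|K|$ pairwise distinct coordinates; this is possible because $J\cap K=\emptyset$ and $r\ge|J|+|K|$, and the remaining rows of $A$ can be filled in so that $A$ keeps orthogonal columns. Then the bracketed matrix has its first $|J|+|K|$ rows equal to an invertible diagonal matrix and its other rows zero, so its column space is the coordinate subspace spanned by $e_1,\dots,e_{|J|+|K|}$. Using $|I|\ge|J|+|K|$, choose $B$ so that some $|J|+|K|$ of its columns, restricted to the rows indexed by $I$, contain an identity block; then $B(I,:)$ is injective on that coordinate subspace, and the product has full column rank $|J|+|K|$.

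For Part~2 the same idea works, but with one genuine complication that I expect to be the crux: the block $A(I,:)^T$ appears twice, scaled by the distinct diagonal matrices $D^c_j$ and $D^c_k$. Choosing the rows $A(I_m,:)$ to be standard basis vectors would make $D^c_jA(I,:)^T$ and $D^c_kA(I,:)^T$ span the same $|I|$-dimensional space, destroying full rank; this is exactly why the hypothesis carries a factor $2$ in front of $|I|$. Instead I would set the $m$-th row $A(I_m,:)$ to a scaling of $e_{a_m}+e_{b_m}$ for $2|I|$ pairwise distinct coordinates $a_1,\dots,a_{|I|},b_1,\dots,b_{|I|}$, and the rows indexed by $J$ and $K$ to standard basis vectors on a further $|J|+|K|$ coordinates disjoint from the rest, all of which fits since $I,J,K$ are disjoint and $r\ge 2|I|+|J|+|K|$. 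With $C$ generic, the $2|I|$ columns coming from the two copies of $A(I,:)^T$, read in the coordinates $\{a_m,b_m\}$, form a matrix that is block diagonal over $m$ with $2\times2$ blocks of determinant $c_{a_m}(j)c_{b_m}(k)-c_{a_m}(k)c_{b_m}(j)\ne 0$, while the $J$- and $K$-columns are scaled standard basis vectors on disjoint coordinates; hence the bracketed matrix has full column rank $2|I|+|J|+|K|$, and since a generic $B$ satisfying Assumption~\ref{ass:ON} is injective (using $n\ge r$), so is the full product. The only remaining bookkeeping, maintaining orthogonality of the columns of $A$, can be handled by adding for each $m$ an auxiliary row equal to a scaling of $e_{a_m}-e_{b_m}$ (there is room, since $2|I|+|J|+|K|\le r\le n$), so that the contributions of each such pair to $A^TA$ are diagonal. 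Apart from this orthogonality bookkeeping and the doubled-block observation, the argument is the routine ``one example suffices'' reasoning already used for Lemma~\ref{lemma:lemma_proof}.
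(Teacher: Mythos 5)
Your overall strategy is the same as the paper's: factor out $B$, exhibit a single explicit triple $(A,B,C)$ satisfying Assumption~\ref{ass:ON} for which the relevant matrix has full column rank, and invoke genericity. For Part~1 your construction is essentially the paper's, with the minor cosmetic difference that you push the support structure into $A$ (rows equal to distinct standard basis vectors) and keep $C$ generic, while the paper keeps $A,B$ flexible and instead restricts the supports of $D_i^c$ and $D_j^c$ to two disjoint index blocks. Both bookkeepings lead to the same scaled-identity picture, and both need the $n\geq r$ implicit in the orthonormality constraint to finish the orthogonality fill-in that you describe.

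For Part~2 you actually do something more careful than what the paper prints, and you should feel good about it. The paper's written example takes $D_i^c,D_j^c,D_k^c$ supported on three disjoint index sets of sizes $|I|,|J|,|K|$. But then the two blocks $M_i(:,J)=B D_i^c A(J,:)^T$ and $M_i(:,K)=B D_i^c A(K,:)^T$ both factor through $B(:,\tilde I)D_i^c(\tilde I,\tilde I)$, whose column space is only $|I|$-dimensional, so that example cannot have combined rank $|J|+|K|$ unless $|I|\geq|J|+|K|$; in the actual application (Lemma~\ref{lemma:unique2nd}) one has $|I|=|J|=|K|=2b+1$, so the construction as written fails. The obstruction you flag — that $A(I,:)^T$ appears twice, scaled by $D_j^c$ and $D_k^c$, so single-coordinate rows of $A(I,:)$ would make those two blocks parallel column by column — is the real content, and your fix, taking each $A(I_m,:)$ supported on two fresh coordinates $a_m,b_m$ and using the generic $2\times 2$ determinants $c_{a_m}(j)c_{b_m}(k)-c_{a_m}(k)c_{b_m}(j)$, is exactly what is needed; it also explains where the $2|I|$ in the hypothesis comes from. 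Your auxiliary-row trick $e_{a_m}-e_{b_m}$ for restoring orthogonality of $A$'s columns is correct and fits because $2|I|+|J|+|K|\leq r\leq n$. In short: Part~1 matches the paper; Part~2 is a correct proof that repairs a gap in the paper's own argument, and an equivalent repair would be to read the paper's choice as $D_i^c$ supported on a set of size $|J|+|K|$ and $D_j^c,D_k^c$ on two disjoint sets of size $|I|$ each, which matches your two-coordinate construction in spirit.
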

\begin{proof}
Just as in the first paragraph of appendix~\ref{sec:proof_lemma_count}, we need only produce examples of the matrices in statements $1$ and $2$ with full rank, under the given assumptions.

For statement $1$, choose $D_i^c$ to have non-zero diagonal elements for the indices contained in $J$, and $D_j^c$ in $K$, which is possible since $r \geq \abs{J} + \abs{K}$. Since $J$ and $K$ were assumed disjoint and $\abs{I} \geq \abs{J} + \abs{K}$, the entries of $A$ and $B$ with row indices in $I$ and column indices in $J$ and $K$ can be chosen separately, so that the matrix in \eqref{eq:full_rank1} equals e.g., the $\abs{I}\times(\abs{J}+\abs{K})$ identity matrix. This has full column rank, which concludes the proof of the first statement.

For statement $2$, let $D_i^c$, $D_j^c$, $D_k^c$ have non-zero elements only in the diagonal elements with indices in $I,J,K$, respectively. Since $I,J,K$ are disjoint, we can choose the columns of $A,B,C$ contained in $I,J,K$ separately to ensure the matrix in \eqref{eq:full_rank2} equals the $n\times (2\abs{I} + \abs{J} + \abs{K})$ identity matrix, with full column rank.
\end{proof}

We can now present the proof of lemma~\ref{lemma:unique2nd}.
\begin{proof}[Proof of lemma~\ref{lemma:unique2nd}]
We show that the kernel of \eqref{eq:solve_N2nd} is trivial, so take $X_{i}$ from this kernel, for $i\in I$. We first show that the rows of each $X_{i}$ contained in $\mathcal{I}_{k_i}(2b) \smallsetminus \mathcal{I}_{k_i}(b)$ are zero. To see this, we can express the fact that the $X_{i}$ are in the kernel of $\eqref{eq:solve_N2nd}$ by
\begin{equation}\label{eq:2ndfirst}
M_{i}^TX_{j} - M_{j}^T X_{i} = S_{ij}, \quad i,j \in I, i \neq j,
\end{equation}
where each $S_{ij}$ is a symmetric $n\times n$-matrix. Fix a pair $i,j$ in $I$ with $\mathcal{I}_{k_j}(3b) \cap \mathcal{I}_{k_i}(3b) = \emptyset$, which is possible under the assumptions on $I$ in \eqref{eq:defI1}--\eqref{eq:defI2}. We restrict \eqref{eq:2ndfirst} to rows and columns with indices not contained in $J := \mathcal{I}_{k_j}(3b)$. The columns of $X_i$ and $X_j$ not contained in $J$ have non-zero elements in rows with indices $\mathcal{I}_{k_i}(2b)$ and $\mathcal{I}_{k_j}(b)$, respectively. Taking this zero structure into account, \eqref{eq:2ndfirst} means that
\begin{equation}\label{eq:2ndproof}
\left[ M_{j}^{ T}(J^\complement,\mathcal{I}_{k_i}(2b)), -M_{i}^{T}(J^\complement,\mathcal{I}_{k_j}(b)) \right] \begin{bmatrix}
X_{i}(\mathcal{I}_{k_i}(2b),J^\complement) \\ X_{j}(\mathcal{I}_{k_j}(b),J^\complement)
\end{bmatrix},
\end{equation}
is a symmetric matrix. By lemma~\ref{lemma:full_rank}, the left matrix has full column rank provided $\abs{J^\complement} = n - (6b+1) \geq 6b+2$, i.e., $n \geq 12b+3$ and $r \geq 6b+2$. By lemma~\ref{lemma:sym_switch}, this means that 
\begin{equation}\label{eq:2ndclaim}
 \begin{bmatrix}
X_{i}(\mathcal{I}_{k_i}(2b),J^\complement) \\ X_{j}(\mathcal{I}_{k_j}(b),J^\complement)
\end{bmatrix}^T = \left[ M_{j}^{ T}(J^\complement,\mathcal{I}_{k_i}(2b)), -M_{i}^{ T}(J^\complement,\mathcal{I}_{k_j}(b)) \right] S,
\end{equation}
for some symmetric matrix $S$.

Next, the columns of $X_{i}(\mathcal{I}_{k_i}(2b),J^\complement)^T$ with indices in $\mathcal{I}_{k_i}(2b)\smallsetminus \mathcal{I}_{k_i}(b)$ have by construction at least $n - (4b+1) - \abs{J} = n - (10b+2)$ zero elements. The corresponding rows of
\begin{equation}
\left[ M_{j}^{ T}(J^\complement,\mathcal{I}_{k_i}(2b)), -M_{i}^{ T}(J^\complement,\mathcal{I}_{k_j}(b)) \right],
\end{equation}
are linearly independent by lemma~\ref{lemma:full_rank}, provided it has at least $n - (10b+2)$ columns, i.e., provided $n \geq 16b+4$, and $r \geq 6b+2$. It follows that the columns of $S$ with indices in $\mathcal{I}_{k_i}(2b) \smallsetminus \mathcal{I}_{k_i}(b)$ are zero. The same columns of $X_{i}^T$ are therefore zero as well. 

We next show that the remaining elements of the $X_{i}$ are zero. Equation~\eqref{eq:2ndfirst} now says that
\begin{equation}\label{eq:2ndthird}
\left[ M_{i}^{ T}(:,\mathcal{I}_{k_j}(b)), -M_{j}^{T}(\mathcal{I}_{k_i}(b)) \right] \begin{bmatrix}
X_{j}(\mathcal{I}_{k_j}(b), :) \\ X_{i}(\mathcal{I}_{k_i}(b),:)
\end{bmatrix}
\end{equation}
is a symmetric matrix. Lemma~\ref{lemma:sym_switch} shows that 
\begin{equation}\label{eq:2ndfourth}
\begin{bmatrix}
X_{j}(\mathcal{I}_{k_j}(b), :)^T, X_{i}(\mathcal{I}_{k_i}(b),:)^T
\end{bmatrix} = \left[ M_{i}^{ T}(:,\mathcal{I}_{k_j}(b)), -M_{j}^{ T}(:,\mathcal{I}_{k_i}(b)) \right] T_{ij},
\end{equation}
for $T_{ij}$ symmetric matrices. Every column of $X_{i}(\mathcal{I}_{k_i}(b),:)^T$ is therefore contained in the intersection of the ranges of the matrices $\left[ M_{i}^{T}(:,\mathcal{I}_{k_j}(b)), -M_{j}^{T}(:,\mathcal{I}_{k_i}(b)) \right]$, for $j$ in $I$, $j\neq i$. We now claim that this intersection consists of only the zero vector. Enumerating three distinct elements of $I$ as $i,j,\ell$, a vector in the intersection of these spaces is contained in the null space of the matrix
\begin{equation}
\begin{bmatrix}
M_i^{ T}(:,\mathcal{I}_{k_j}(b)), & -M_j^{ T}(:,\mathcal{I}_{k_i}(b)) , &M_i^{ T}(:,\mathcal{I}_{k_\ell}(b)) , & -M_\ell^{ T}(:,\mathcal{I}_{k_i}(b))
\end{bmatrix}.
\end{equation}

By lemma~\ref{lemma:full_rank}, this matrix has null space consisting of the zero vector if $r \geq 8b+4$. It follows that $X_{i} = 0$, which concludes the proof.

\end{proof}

\bibliographystyle{siamplain}
\bibliography{references}

\end{document}